\documentclass[11pt,a4paper]{amsart}
\usepackage{amsmath,amsfonts,xargs,microtype,amssymb,bbm}
\usepackage[utf8]{inputenc}
\usepackage{graphicx} 
\usepackage{amsthm}
\usepackage{tikz}
\usetikzlibrary{calc}
\usepackage{tkz-tab}
\usepackage{theoremref}
\usepackage{thmtools, thm-restate}
\declaretheorem{theorem}
\usepackage{thm-restate}
\usepackage{hyperref}
\usepackage{comment}
\usepackage{color}
\usepackage{mathtools}
\usepackage{subcaption}
\usepackage{hyphenat}
\mathtoolsset{showonlyrefs}

\hypersetup{colorlinks=true,urlcolor=black, pdftitle="LAB-R2"}

\allowdisplaybreaks

\begin{document}

\newcommand{\R}{ {\mathbb R}}
\newcommand{\N}{ {\mathbb N}}
\newcommand{\s}{ {\mathbb S}}
\newcommand{\conv}{{\operatorname{conv}}}
\newcommand\numberthis{\addtocounter{equation}{1}\tag{\theequation}}

\theoremstyle{plain}

\newtheorem{lemma}[theorem]{Lemma}
\newtheorem{corollary}[theorem]{Corollary}
\newtheorem{prop}[theorem]{Proposition}

\newtheorem{conj}{Conjecture}

\theoremstyle{remark}
\newtheorem{remark}{Remark}

\newcommand{\correspondingA}{\@myfnsymbol{1}}
\newcommand{\correspondingB}{\@myfnsymbol{2}}
\newcommand{\correspondingC}{\@myfnsymbol{3}}
\newcommand{\correspondingD}{\@myfnsymbol{4}}
\newcommand{\correspondingE}{\@myfnsymbol{5}}
\makeatother



\title[Equality cases for the $L_p$-Rogers--Shephard inequality]{Equality cases for the $L_p$-Rogers--Shephard inequality in the plane and for locally anti-blocking bodies in $\mathbb{R}^n$}

\author[M. Fradelizi, A. Manui, M. Meyer, C. S. Ndiaye]{Matthieu Fradelizi, Auttawich Manui, Mark Meyer, and Cheikh Saliou Ndiaye}

\subjclass[2020]{Primary: 52A20, 52A21; Secondary: } 
\keywords{Rogers--Shephard inequality, $L_p$-sum, locally anti-blocking bodies}

\date{}

\begin{abstract} 
The classical Rogers--Shephard inequalities were extended to the Firey $L_p$-summation by Bianchini and Colesanti in the plane and by Zvavitch and the second and fourth authors for locally anti-blocking convex bodies in $\R^n$, leaving open the equality cases. 
    We characterize the equality cases of these inequalities: in both cases, for $p>1$, equality holds if and only if the convex body is a simplex with one vertex at the origin.
\end{abstract}

\maketitle

\section{Introduction}

A \textit{convex body} is a subset of $\mathbb{R}^n$ that is compact and convex with nonempty interior, and the \textit{Minkowski sum} of convex bodies $K,L\subset\mathbb{R}^n$ is the set
\begin{equation*}
    K+L:=\{k+l:k\in K,l\in L\}.
\end{equation*}
In a 1957 paper \cite{RS-57}, Rogers and Shephard proved the following sharp volume bound: if $K\subset\mathbb{R}^n$ is a convex body such that $0\in K$, and if $-K:=\{-k:k\in K\}$ is the reflection of $K$ about the origin, then the volume of the difference body $K-K:=K+(-K)$ satisfies the bound
\begin{align}\label{eq:rogers_shephard_minkowski_sum}
    |K-K|\leq \binom{2n}{n}|K|,
\end{align}
with equality if and only if $K$ is a simplex. In another paper \cite{RS-58}, Rogers and Shephard proved: if $K\subset\mathbb{R}^n$ is a convex body such that $0\in K$, then the convex hull of the union $K\cup (-K)$ satisfies the bound
\begin{equation}\label{eq:rogers_shephard_convex_hull}
    |\textup{conv}(K\cup(-K))|\leq 2^n|K|,
\end{equation}
which is also sharp, and equality holds if and only if $K$ is a simplex with one vertex at the origin.

To generalize the above inequalities, we begin by recalling that the support function $h_K$ of a convex set $K\subset\mathbb{R}^n$ is the function that satisfies
\begin{equation*} 
    h_K(x):=\sup_{k\in K}\langle x,k\rangle,\qquad x\in \mathbb{R}^n.
\end{equation*}
If $K,L\subset \mathbb{R}^n$ are convex sets containing the origin, then for any fixed $p\in [1,\infty)$ there exists a convex set $K\oplus_p L$, containing the origin whose support function satisfies
\begin{align}\label{def:Firey-sums}
    h_{K \oplus_p L}(x)=\left(h_K^p(x)+h_L^p(x)\right)^{\frac{1}{p}}.
\end{align}
We can take the limit as $p\rightarrow\infty$ to arrive at the natural definition $h_{K\oplus_{\infty} L}=\max\{h_K,h_L\}$. The set $K\oplus_p L$ is the \textit{Firey sum}, or \textit{$L_p$-sum} of the convex bodies $K$ and $L$. The concept of Firey sum originates from the work of Firey \cite{F-62}. Of special interest is the fact that
\begin{align*}
    K\oplus_1L=K+L,\qquad \textup{and}\qquad K\oplus_{\infty} L=\textup{conv}(K\cup L),
\end{align*}
so we can rephrase the bounds \eqref{eq:rogers_shephard_minkowski_sum} and \eqref{eq:rogers_shephard_convex_hull} in terms of the Firey sum for the cases $p=1$ and $p=\infty$.

To generalize the work of Rogers and Shephard in the setting of $L_p$-sums, it is natural to conjecture that if $K\subset\mathbb{R}^n$ is a convex body with $0\in K$, then 
\begin{equation}\label{eq:rogers_shephard_conjecture}
    |K\oplus_p(-K)|\leq \kappa_{n,p}|K|,\qquad \textup{where}\qquad \kappa_{n,p}:=\sum_{i=0}^{n}\binom{n/q}{i/q}^{-1}\binom{n}{i}^2,
\end{equation}
where $q$ satisfies $\frac{1}{p}+\frac{1}{q}=1$, and equality holds for $p>1$ if and only if $K$ is a simplex with one vertex at the origin. From the fact that
\begin{equation*}
    \kappa_{n,1}=\binom{2n}{n}\qquad \textup{and}\qquad \kappa_{n,\infty}=2^n,
\end{equation*}
we see that conjecture \eqref{eq:rogers_shephard_conjecture} reduces to the original inequalities of Rogers and Shephard when $p=1$ and $p=\infty$.

Bianchini and Colesanti \cite[Theorem 1.1]{BC-08} were able to prove inequality \eqref{eq:rogers_shephard_conjecture} in the planar case $n=2$ by making a key observation about parallel chord movements of convex bodies. Zvavitch, together with the second and fourth named authors of this paper, managed to show that inequality \eqref{eq:rogers_shephard_conjecture} is true in dimension $n\geq 2$ if one restricts to locally anti-blocking convex bodies \cite[Lemma 23]{MNZ-25}, which are the convex bodies for which all orthogonal projections onto and intersections with the coordinate hyperplanes are identical. Moreover, they established that equality holds among anti-blocking convex bodies if and only if the body is a simplex. Recall that an anti-blocking convex body is a locally anti-blocking convex body included in the positive orthant.

In the forthcoming paper \cite{FMMN-26-2}, the authors of the present paper show that conjecture~\eqref{eq:rogers_shephard_conjecture} is true for $L_2$-zonoids, and even more, that in this setting it is possible to replace the constant $\kappa_{n,p}$ with the improvement $2^n$.

The purpose of this paper is to fill in two gaps in the literature by characterizing the conditions of equality for the planar inequality of Bianchini and Colesanti, and the locally anti-blocking inequality of Zvavitch, with the second and fourth named authors of this paper. In the case of locally anti-blocking convex bodies, we will show:

\begin{theorem}\label{thm:anti-blocking_eq}
    Let $p\in (1,+\infty]$, and let $K\subset\mathbb{R}^n$ be a locally anti-blocking convex body. Then
    \begin{equation*}
        |K\oplus_p (-K)|=\kappa_{n,p}|K|
    \end{equation*}
    holds if and only if $K$ is a simplex of the form $K=\textup{conv}(0,\alpha_1e_1,\dots,\alpha_ne_n)$ for non zero real numbers $\alpha_i$.
\end{theorem}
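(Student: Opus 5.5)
The plan is to decompose along orthants and reduce to the known equality case among anti-blocking bodies. For a sign vector $\varepsilon\in\{-1,1\}^n$ let $\R^n_\varepsilon$ be the corresponding closed orthant and $K_\varepsilon:=K\cap\R^n_\varepsilon$. Since $K$ is locally anti-blocking, each $K_\varepsilon$ is, after the reflection $x\mapsto(\varepsilon_ix_i)_i$, an anti-blocking body, and $|K|=\sum_\varepsilon|K_\varepsilon|$. The first step is to recall the structure of the proof of \cite[Lemma 23]{MNZ-25}. There, $K\oplus_p(-K)$ is again locally anti-blocking, and its piece in a fixed orthant is computed from the pieces of $K$ in that orthant and in the opposite one. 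Concretely, for each coordinate subspace splitting, the $\varepsilon$-part of $K\oplus_p(-K)$ contains an ``$L_p$-type mixed product'' of the sections $K_\varepsilon\cap E_I$ and $(-K_{-\varepsilon})\cap E_{I^c}$, where $E_I=\mathrm{span}(e_i:i\in I)$. The volume bound then follows by combining a lower bound $|K_\varepsilon\cap E_I|\,|K_{-\varepsilon}\cap E_{I^c}|\ge$ (factor) $\times\ldots$ (Rogers--Shephard section type inequalities for anti-blocking bodies) with a summation over $\varepsilon$ and $I$, yielding $\kappa_{n,p}$ with weights $\binom{n/q}{i/q}^{-1}\binom n i^2$.

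The second step is to track equality in each inequality of that chain. If $|K\oplus_p(-K)|=\kappa_{n,p}|K|$, then every intermediate inequality must be an equality. The key one is the Rogers--Shephard-type inequality for anti-blocking bodies, namely $\binom n i|K_\varepsilon\cap E_I|\,|P_{E_{I^c}}K_\varepsilon|\ge |K_\varepsilon|$ (for anti-blocking bodies sections and projections onto coordinate subspaces coincide). Its equality case, known from \cite{MNZ-25}, forces $K_\varepsilon$ to be a simplex $\conv(0,\alpha_1^\varepsilon e_1,\dots,\alpha_n^\varepsilon e_n)$ (in the reflected coordinates). For $p>1$, the weights $\binom{n/q}{i/q}^{-1}$ are genuinely strict for $0<i<n$. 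This is where $p>1$ enters: the $L_p$ ``sum'' of the two pieces is a strict subset of the Minkowski sum unless there is a specific degenerate configuration. Because of this strictness, the intermediate indices $1\le i\le n-1$ actually contribute, so equality must hold for those $I$.

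The third step is to glue the orthant pieces. Local anti-blocking implies that the section of $K$ with each coordinate hyperplane does not depend on the orthant. Hence $K_\varepsilon\cap\{x_i=0\}$ is the same set for all $\varepsilon$ agreeing off coordinate $i$. Iterating, the vertex of $K_\varepsilon$ on the $i$-th axis is $\alpha_i^\varepsilon e_i$ with $|\alpha_i^\varepsilon|=\beta_i$ independent of $\varepsilon$, so $K$ is a cross-polytope $\conv(\pm\beta_ie_i)$, that is, the union of the $2^n$ simplices. The remaining task is to show that this cross-polytope is not extremal unless $K$ lies in a single orthant. A cleaner route is to use equality in the step that pairs $K_\varepsilon$ with $-K_{-\varepsilon}$ for the extreme indices $I=\emptyset$ and $I=[n]$: equality there forces the contribution of each orthant to be concentrated, i.e. $|K_\varepsilon|\,|K_{-\varepsilon}|=0$ or a convexity/Brunn--Minkowski equality. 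Since locally anti-blocking bodies with nonempty interior containing $\beta_ie_i$ and $-\beta_ie_i$ have all $K_\varepsilon$ full-dimensional, the extremal configuration must instead have each axis carrying only one sign. That gives $K=\conv(0,\alpha_1e_1,\dots,\alpha_ne_n)$. Conversely, for such a simplex one checks directly, via the computation in \cite{MNZ-25}, that equality holds.

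The main obstacle is the third step: pinning down exactly which inequality in the \cite{MNZ-25} chain becomes strict when $K$ meets both $\alpha e_i$ and $-\beta e_i$, since the definition of locally anti-blocking may then force symmetry in coordinate $i$. I would first try to show directly that for the cross-polytope-type bodies the inequality is strict for $p>1$. To do this I would compare $|K\oplus_p(-K)|$ with $\kappa_{n,p}|K|$ by exhibiting a single orthant where the mixed $L_p$ piece is strictly smaller than the product bound. The natural place to look is the orthant where both $K_\varepsilon$ and $K_{-\varepsilon}$ are nondegenerate, using the strict concavity of $t\mapsto t^{1/p}$ for $p>1$.
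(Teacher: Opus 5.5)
There is a genuine gap, and it is the step you flag yourself as the main obstacle: you never show that equality forces $K$ to meet each coordinate axis on one side only, and the heuristic you offer does not establish it. The missing ingredient is the equality case of the orthant-wise inequality \eqref{eq:k-delta}, $|K_\delta\oplus_p -(K_{-\delta})|\le|K_\delta\oplus_p K_{-\delta}|$. This inequality compares the $L_p$-sum of two pieces lying in the \emph{same} orthant with the $L_p$-sum of two pieces lying in \emph{opposite} orthants. In \cite{MNZ-25} it is proved by iterated shaking, using $S_i(A)\oplus_p S_i(-B)\subset S_i(A\oplus_p -B)$.

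Reflect so that $A=K_\delta$ and $B=-K_{-\delta}$ lie in $\R^n_+$, and suppose both have nontrivial projections $[0,\alpha_ie_i]$ and $[0,\beta_ie_i]$ onto some axis $\R e_i$. Then $S_i(A)\oplus_p S_i(-B)$ projects onto a segment of length $\|(\alpha_i,\beta_i)\|_p$, while $S_i(A\oplus_p -B)$ projects onto one of length $\alpha_i+\beta_i$. For $p>1$ the inclusion of convex bodies is therefore strict, and so is the volume inequality; this is the content of Lemma~\ref{reflection_lemma}. Taking $\delta$ with $K_\delta$ full-dimensional, equality forces $K_{-\delta}=\{0\}$. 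Hence $K$ lies in one closed orthant, and the known anti-blocking equality case from \cite{MNZ-25} finishes the proof. With this, your orthant-by-orthant Rogers--Shephard analysis and the gluing step are unnecessary.

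Your proposed repair, which looks for an orthant where both $K_\varepsilon$ and $K_{-\varepsilon}$ are nondegenerate, can fail. For $K=\conv(-e_1,e_1,e_2)\subset\R^2$ no such orthant exists. Strictness there is detected only through the shared direction $e_1$ of $K_{(1,1)}$ and $-K_{(-1,-1)}=[0,e_1]$.

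Several intermediate claims are also incorrect.
\begin{itemize}
\item Your recollection of the chain runs in the wrong direction. The orthant piece of $K\oplus_p(-K)$ is bounded \emph{above} by $|K_\delta\oplus_p K_{-\delta}|=\sum_i\sum_E\binom{n/q}{i/q}^{-1}|P_EK_\delta|\,|P_{E^\perp}K_{-\delta}|$. The Rogers--Shephard input is $|P_EK_\tau|\,|P_{E^\perp}K_\tau|\le\binom{n}{i}|K_\tau|$.
\item The inequality $\binom{n}{i}|K_\varepsilon\cap E_I|\,|P_{E_{I^c}}K_\varepsilon|\ge|K_\varepsilon|$ that you call key follows trivially from $K_\varepsilon\subset P_{E_I}K_\varepsilon+P_{E_{I^c}}K_\varepsilon$. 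It is never an equality for full-dimensional $K_\varepsilon$ and $0<i<n$, so it cannot carry the equality analysis.
\item The role you assign to $p>1$ is misplaced. The weights $\binom{n/q}{i/q}^{-1}$ are positive for every $p\ge 1$. At $p=1$ the body $\conv(-e_1,e_1,e_2)$ attains equality, as every triangle does. So $p>1$ must enter through the strict inequality $\|(\alpha,\beta)\|_p<\alpha+\beta$ above, not through the weights.
\item The gluing is wrong. If $K\cap\R e_i=[-b_ie_i,a_ie_i]$, the axis vertex of $K_\varepsilon$ is $a_ie_i$ or $-b_ie_i$ according to $\varepsilon_i$. In general $a_i\neq b_i$, and one of them may be $0$. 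So you get neither a cross-polytope $\conv(\pm\beta_ie_i)$ nor full-dimensional pieces $K_\varepsilon$ for all $\varepsilon$.
\end{itemize}
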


In the planar case, we will make use of a technique of Mathieu Meyer \cite{M-91} to prove:

\begin{theorem}\label{thm:Planar-case-general}
    Let $p\in (1,+\infty]$, and let $K\subset\mathbb{R}^2$ be a convex body such that $0\in K$. Then
    \begin{equation*}
        |K\oplus_p(-K)|=\kappa_{2,p}|K|
    \end{equation*}
    holds if and only if $K$ is a triangle with one vertex at the origin.
\end{theorem}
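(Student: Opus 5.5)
The plan is to prove the "if" direction by direct computation and the "only if" direction by going back through the Bianchini--Colesanti proof of \eqref{eq:rogers_shephard_conjecture} for $n=2$ and tracking where equality forces rigidity. For the "if" direction, let $K=\conv(0,a,b)$. By linear invariance ($|TK\oplus_p(-TK)|=|\det T|\,|K\oplus_p(-K)|$, since $h_{TK}=h_K\circ T^t$), we may take $K=\conv(0,e_1,e_2)$. This is anti-blocking, so the equality case of \cite[Lemma 23]{MNZ-25}, or Theorem~\ref{thm:anti-blocking_eq} with $n=2$, gives $|K\oplus_p(-K)|=\kappa_{2,p}|K|$.

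For the converse I would adapt Meyer's shaking/parallel chord technique from \cite{M-91}. The idea is to write the quantity as a volume functional that behaves convexly along parallel chord movements. Fix a direction $u$ and move $K$ along chords parallel to $u$: $K_t=\{x+t\,\beta(x)u\}$ with $\beta$ depending only on the coordinate orthogonal to $u$. Then $|K_t|$ is constant. The Bianchini--Colesanti observation is that $t\mapsto |K_t\oplus_p(-K_t)|$ (or its reciprocal power) is convex, or at least quasi-convex. Their proof of the inequality then steers $K$, via such movements, to a triangle with a vertex at $0$, without decreasing the ratio.

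The equality analysis then runs as follows. Assume equality holds for $K$. Take a family $K_t$, $t\in[-1,1]$ with $K_0=K$, where both endpoints $K_{\pm1}$ are admissible bodies containing $0$ (the origin must remain in the body, so movements fixing the chord through $0$ are used). Since $\kappa_{2,p}$ is the maximum and the function is convex in $t$, it must be constant on $[-1,1]$. Here I would use Meyer's technique to extract more than the inequality: constancy of the convex function should force the support function $h_{K_t\oplus_p(-K_t)}$ to depend affinely on $t$ in a strong sense. For $p>1$, the strict convexity of $s\mapsto s^p$ should then force $K$ to have no "free" boundary in direction $u$. Concretely, I would show that every boundary point of $K$ not on a segment through $0$ or a vertex yields a nontrivial chord movement along which the functional strictly increases. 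Taking $u$ arbitrary, this shows $K$ is a polygon, and then that $K$ has at most three vertices with one at $0$.

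A cleaner route I would try first reduces everything to polygons and a finite-dimensional computation. Any equality body can be approximated, but equality is not preserved under approximation, so instead I would directly show $K$ is a polygon: if $\partial K$ contained a strictly convex arc, a small chord movement supported near that arc would produce a strict first-order gain. For a polygon, moving one vertex $v$ parallel to the line through its neighbours is a parallel chord movement. Equality forces the functional, which is convex in the vertex position, to be constant on an interval. The strict version for $p>1$ should then show that this is impossible unless the neighbours of $v$ together with $v$ and $0$ degenerate. Iterating removes vertices until a triangle with $0$ as a vertex remains. Finally I would rule out triangles with $0$ in the interior or on an edge by a direct computation, showing the ratio is strictly smaller there; alternatively, the same vertex-sliding argument works with $0$ as an obstacle.

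The main obstacle is the strictness step: proving that for $p\in(1,\infty]$ the map $t\mapsto|K_t\oplus_p(-K_t)|$ is not affine (constant) along a nontrivial admissible movement unless $K$ is already a triangle with vertex $0$. For $p=\infty$ strict convexity of $s^p$ is unavailable, so that case would be handled separately through the Rogers--Shephard equality case for $\conv(K\cup(-K))$ quoted in the introduction.
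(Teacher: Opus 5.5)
Your ``if'' direction is fine and matches the paper, and deferring $p=\infty$ to the classical Rogers--Shephard equality case is also fine. The ``only if'' direction has a genuine gap. Everything rests on the step you call the main obstacle, and none of the mechanisms you propose for it works as stated.

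Convexity of $t\mapsto|K_t\oplus_p(-K_t)|$ together with maximality at an interior parameter gives only \emph{constancy} along admissible movements. Turning constancy into rigidity is the entire content of the theorem. Your heuristic that constancy forces $h_{K_t\oplus_p(-K_t)}$ to be affine in $t$ is false. Take $K_t=\conv(0,e_1,e_2+te_1)$, which is exactly your vertex-sliding move applied to $e_2$. These are parallel chord movements of equality bodies, yet $h_{K_t\oplus_p(-K_t)}(1,-1)=\bigl(1+(1-t)^p\bigr)^{1/p}$ for $t\in[0,1]$, which is not affine when $1<p<\infty$. So equality bodies admit nontrivial admissible movements with constant functional. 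Strictness therefore cannot come from a generic principle such as strict convexity of $s\mapsto s^p$; it has to come from movements whose other endpoint is a configuration already known to be strict.

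Your reduction to polygons is also unsupported. Equality is not stable under approximation, as you note yourself. You also give no computation showing that a localized chord movement near a strictly convex arc has a nonzero first variation.

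For polygons, your vertex-sliding scheme can probably be completed, but by induction on the number of vertices rather than by a ``strict version'' of convexity. Constancy passes equality to the endpoint polygons. Theorem~\ref{thm:anti-blocking_eq} then supplies the contradiction: it gives strict inequality for linear images of locally anti-blocking bodies whose origin is not a vertex, e.g.\ triangles with $0$ in the relative interior of an edge. This is how the paper disposes of quadrilaterals.

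For general convex bodies the paper uses a different idea, Meyer's half-body technique, and none of its ingredients appears in your proposal:
\begin{itemize}
\item Take an affine diameter of $K$ through $0$ and normalize so that $P_{e_2^\perp}K=K\cap e_2^\perp$. Translating along the diameter is a parallel chord movement, which reduces to the case $P_{e_2^\perp}K=K\cap e_2^\perp=[0,1]$.
\item In that setting $(A\oplus_pB)^\pm=A^\pm\oplus_pB^\pm$ (Lemma~\ref{lem:upper-lower-part-lp}). Reflecting $K^+$ and $K^-$ across $e_2^\perp$ gives symmetric bodies $A,B$ attaining equality in Proposition~\ref{prop:lp-two-bodies}.
\item Along the even movement stretching upper halves by $1+t$ and lower halves by $1-t$, constancy holds chord by chord. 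Comparing the central chord at $t=0$ (a maximum of a sum) with $t=1$ (a sum of maxima) pins the maximizer to $x=1=-y$, $u=1/2$. This yields $f(x)\le f(1)\bigl(\frac{1+x^q}{2}\bigr)^{1/q}$, hence $f$ is monotone, and $K^\pm$ attain equality separately.
\item The triangle shape then follows from the explicit chord formula of Lemma~\ref{lem:formula-h+}, a comparison body in which the decreasing part of $f$ is replaced by a segment, and a shear/rotation.
\end{itemize}
These steps are what actually convert constancy into structure for non-polygonal $K$.
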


We dedicate the remainder of the paper to the proof of the above two theorems. We will give a review of relevant preliminary facts in Section \ref{sec:preliminarily}. The following sections, Sections \ref{subsec:lab} and \ref{subsec:r2}, cover the proofs of Theorems \ref{thm:anti-blocking_eq} and \ref{thm:Planar-case-general}.

\subsection{Related literature}

The reader may consult the surveys of Lutwak \cite{L-93,L-96} for a general overview of the $L_p$-Brunn--Minkowski theory.
We refer \cite{BC-08, CCG-99,CG-06,CG-02,CG-06-02} for background on shadow systems

Aside from the results of this paper, there is some interest in the study of the Rogers--Shephard inequality in a general setting. For example, the paper \cite{AHRYZ-21} studies the Rogers--Shephard inequality for general measures, and the papers \cite{A-19,AGJV-16,C-06,AAGJV-19,MMZZ-26} focus on functional analogues.

The papers \cite{K-25,CFS-17,F-71} cover recent advancements in the study of locally anti-blocking convex bodies, which is a topic of independent interest.

\subsection{Funding} 
The second-named author is supported by the Chateaubriand Fellowship of the Office for Science \& Technology of the Embassy of France in the United States, the U.S. National Science Foundation Grant DMS-2247771, and the United States-Israel Binational Science Foundation (BSF) Grant 2018115.

The third-named author is supported by the
National Science Foundation through the MSPRF program (award number: 2502794).

\section{Preliminaries} 
    \label{sec:preliminarily}
    We refer to \cite{S-93,L-93,L-96} for general background material. We write $\langle x , y \rangle$ for the inner product of vectors $x,y$. For a set $K \subset \mathbb{R}^n$, we denote by $|K|_m$ the $m$-dimensional Lebesgue measure of $K$ restricted to its affine hull, which we denote by $\textup{aff} (K)$. When the dimension of $K$ is equal to that of the ambient space, we simply write $|K|$. 
    
    We recall the extension of the $L_p$-sum \eqref{def:Firey-sums} to nonconvex sets, introduced by Lutwak, Yang, and Zhang \cite{LYZ-12}. For $p>1$ and sets $K,L\subset\mathbb{R}^n$, define
    \begin{equation}
        \label{def:Firey-sums-non-convex}
        K \oplus_p L:=\left\{(1-t)^\frac{1}{q} x+t^\frac{1}{q} y \mid x \in K, y \in L, t \in[0,1]\right\},
    \end{equation}
    where $q$ satisfies $1/q+1/p=1$. Definition \eqref{def:Firey-sums-non-convex} coincides with \eqref{def:Firey-sums} when $K$ and $L$ are convex compact sets containing the origin, as shown in \cite[Lemma 2]{LYZ-12}. Note also that the $L_p$-sum commutes with linear transformations. That is, for any convex sets $K,L\subset\mathbb{R}^n$ containing origin and any linear transformation $T$, we have
    \begin{equation}
        T (K\oplus_p L) = (TK) \oplus_p (TL).
        \label{eq:linear-invariant-lp}
    \end{equation}
We recall that a convex set $K\subset \R^n$ is locally anti-blocking if for any coordinate subspace $E$, $P_E K = K \cap E$. A convex set $K$ is anti-blocking if it is a locally anti-blocking subset of $\R^n_+$.
One of the main tools in \cite{MNZ-25} is the  Sch\"utellung (or shaking) symmetrization of a convex body $K$ in the direction $i$, which is defined as 
\begin{equation}
    S_i(K):=\{x+te_i:x\in P_{e_i^{\bot}}K, 0\leq t\leq |K\cap \mathbb{R}e_i|_1\},
\end{equation}
and is a special shadow system.

A shadow system  along a direction $u$ is a family of convex bodies $K(u)$ in $\R^n$ obtained by projecting a convex body $\Tilde{K}$ in $\R^{n+1}$ onto $e_{n+1}^\perp$ along a direction $e_{n+1} + u$. Here, $ u $ varies in $e_{n+1}^\perp$.
A linear parameter system along a direction $u$ is a family of convex bodies $\{K_t\}_{t \in I}$ in $\R^n$ that can be written in the form 
\[
    K_t = \conv \{ z + \alpha(z) tu : z\in A \}, 
\]
where $A$ is a bounded set, the speed function $\alpha$ is  a real-valued function on $A$, and the parameter $t$ ranges over on an interval $I\subset \R$. We note the linear parameter is a special case of the shadow system in which $u$ lies on a line, see \cite[Proposition 2.1]{BC-08}. It was proved in \cite{RS-58-2} that the volume of a shadow system is a convex function in $t $ on $ I$. The proof is, in fact, based on that for each line $\ell$ parallel to $u$, the chord length $t\mapsto |\ell\cap K_t|$ is a convex function in $t$, see \cite[Section 3]{CG-06-02}.

The following theorem was proved by Bianchini and Colestanti \cite[Theorem 2.3]{BC-08} and is one of their main ingredients in the proof of inequality \eqref{eq:rogers_shephard_conjecture} for the planar case.
\begin{theorem} \label{thm:BC-shadow-lp}
    Let $\{K_t\}_{t \in I}$ and $\{L_t\}_{t \in I}$ be linear parameter systems along the direction $u$ and let $p \geq 1$. Then, $\{ K_t \oplus_p L_t\}_{t \in I}$ is a linear parameter system along the direction $u$.
\end{theorem}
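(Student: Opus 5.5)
We need to show that if $K_t=\conv\{z+\alpha(z)tu: z\in A\}$ and $L_t=\conv\{w+\beta(w)tu: w\in B\}$, then $K_t\oplus_p L_t$ can again be written as $\conv\{y+\gamma(y)tu : y\in C\}$ for a bounded set $C$ and a speed function $\gamma$ that do not depend on $t$. The natural tool is the set-theoretic description \eqref{def:Firey-sums-non-convex} of Lutwak, Yang and Zhang, which is affine-friendly in the points, rather than the support-function definition. The case $p=1$ is the classical fact for Minkowski sums: take $C=A\times B$ with points $z+w$ and speed $\alpha(z)+\beta(w)$, after noting that $\conv(X)+\conv(Y)=\conv(X+Y)$. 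I will assume $0\in K_t,L_t$ throughout, since this is needed for the Firey sum to be defined by \eqref{def:Firey-sums}. For $p=\infty$ the Firey sum is $\conv(K_t\cup L_t)$, so take $C=A\sqcup B$ with the speeds inherited.

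For $1<p<\infty$ the key steps are as follows. First I prove that for compact $X,Y$ with $0\in\conv X$ and $0\in\conv Y$,
\[
\conv(X)\oplus_p\conv(Y)=\conv\big(X\oplus_p Y\big),
\]
where the right-hand side uses \eqref{def:Firey-sums-non-convex}. This is a support-function computation. For fixed $\lambda\in[0,1]$ and fixed $x,y$, maximize $(1-\lambda)^{1/q}\langle\theta,x\rangle+\lambda^{1/q}\langle\theta,y\rangle$, first over $x\in X$ and $y\in Y$, giving $(1-\lambda)^{1/q}a+\lambda^{1/q}b$ with $a=h_X(\theta)=h_{\conv X}(\theta)\ge0$ and $b=h_Y(\theta)\ge 0$. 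Then maximize over $\lambda$; Hölder gives $(a^p+b^p)^{1/p}$. Hence $h_{X\oplus_pY}=(h_{\conv X}^p+h_{\conv Y}^p)^{1/p}$, and the identity follows.

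Second, I apply this identity with $X=X_t=\{z+\alpha(z)tu:z\in A\}$ and $Y=Y_t$ defined in the same way from $B$ and $\beta$. A point of $X_t\oplus_p Y_t$ has the form
\[
(1-\lambda)^{1/q}z+\lambda^{1/q}w+\big((1-\lambda)^{1/q}\alpha(z)+\lambda^{1/q}\beta(w)\big)tu .
\]
So I take $C$ to be the index set of triples $(z,w,\lambda)\in A\times B\times[0,1]$, with base point $y=(1-\lambda)^{1/q}z+\lambda^{1/q}w$ and speed $\gamma=(1-\lambda)^{1/q}\alpha(z)+\lambda^{1/q}\beta(w)$. Neither depends on $t$, and the base points form a bounded set because $A$ and $B$ are bounded. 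If the definition requires $\gamma$ to be a function of the point rather than of the index, I will carry indexed families throughout (a multiset of base points). Any repeated base point with different speeds is harmless for the convex hull description. Alternatively, one can pass to the shadow-system viewpoint via \cite[Proposition 2.1]{BC-08}.

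The main obstacle is the hypothesis $0\in\conv X_t$ for every $t\in I$, which the support-function identity needs so that $h\ge 0$ and the Hölder optimization is valid. The statement implicitly assumes this, since the Firey sum \eqref{def:Firey-sums} requires the origin in each body. I would state it as a standing assumption for all $t\in I$ and check that the proof uses it only pointwise in $t$. A second subtlety is the boundedness of $A$ for the hull to be compact. If $A$ is merely bounded, I would replace it by its closure, extending the speeds by upper and lower limits, or work with closures of convex hulls. I expect this to be a routine technicality.
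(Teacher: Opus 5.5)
Your proof is correct, and it is essentially the standard argument behind this result, which the paper does not prove itself but cites as Bianchini and Colesanti's Theorem 2.3. Hölder's inequality gives $h_{K\oplus_p L}=\max_{\lambda\in[0,1]}\bigl((1-\lambda)^{1/q}h_K+\lambda^{1/q}h_L\bigr)$ when $0\in K\cap L$, so $K_t\oplus_p L_t$ is the convex hull of the union over $\lambda$ of the linear parameter systems $(1-\lambda)^{1/q}K_t+\lambda^{1/q}L_t$, which is exactly your generating family indexed by $A\times B\times[0,1]$. Your standing assumption $0\in K_t\cap L_t$ for all $t$ is what makes the statement meaningful, and so is working with indexed families, i.e.\ the usual Rogers--Shephard definition, equivalently shadow systems, rather than a single-valued speed function.
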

We are particularly interested in the case of the linear parameter systems for which the speed function $\alpha$ is constant on each chord of $K$ parallel to $u$, so-called the parallel chord movement. More precisely, a parallel chord movement along the direction $u$ is a family of convex bodies $\{K_t\}_{t\in I}$ that can be defined in the form 
\[
    K_t = \conv\{ z+\beta(P_{u^\perp} z) tu : z \in K\}, 
\]
where $\beta $ is a bounded real-valued function defined on $P_{u^\perp} K$, and $I$ is an interval of $\R$. We note that along the parallel chord movement, the volume $K_t$ does not change.
Well-known examples include the Sch\"utellung process and the Steiner symmetrization.

Finally, recall that for a convex body $K \subset \R^n$ and a direction $u \in \s^{n-1}$, one can present $K$ in the following form
\begin{equation}
    K = \{ x+ su: x \in P_{u^\perp} K, -g(x) \leq s \leq f(x)\},
\end{equation}
for some concave functions $f,g: P_{u^\perp} K\to\R$.

\section{Proof of Theorem \ref{thm:anti-blocking_eq}} \label{subsec:lab}



\begin{lemma}\label{strict_containment_implies_strict_bound}
Let $A,B\subset\mathbb{R}^n_+$ be anti-blocking convex compact sets such that $A\oplus_p -B$ is full-dimensional. 
Then 
\begin{equation}\label{volume_orthant_lemma}
        |A\oplus_p B|
    \le|A\oplus_p -B|.
    \end{equation}
Moreover, if there is equality, then for all $i\in [n]$ 
    \begin{equation}\label{strict_symmetrization_containment}
        S_i(A)\oplus_p S_i(-B)= S_i(A\oplus_p -B).
    \end{equation}    
\end{lemma}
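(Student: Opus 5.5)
We obtain $A\oplus_p B$ from $A\oplus_p -B$ by performing the Schüttelung symmetrizations $S_1,\dots,S_n$ in turn, and check that volume can only drop. The base observation is this. Since $A$ is anti-blocking, $S_i(A)=A$ for every $i$. Indeed, every chord of $A$ in direction $e_i$ is $\{x+te_i:0\le t\le f(x)\}$ with $x\in P_{e_i^\perp}A=A\cap e_i^\perp$. Similarly, $S_i(-B)$ is the reflection of $-B$ in the hyperplane $e_i^\perp$: it turns the $i$-th coordinate of $-B$ positive.

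Iterating over $i=1,\dots,n$ therefore carries $-B$ to $B$ while leaving $A$ fixed. It remains to compare $S_i(A\oplus_p C)$ with $A\oplus_p S_i(C)$ at each step.

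\textbf{Step 1 (inclusion).} For a single direction $i$, take $C=\varepsilon$-reflected copies of $B$; precisely, $C=R_J B$, where $R_J$ flips the signs of the coordinates in $J$. We show
\[
S_i(A)\oplus_p S_i(C)\subseteq S_i(A\oplus_p C).
\]
To do this, we realize $S_i$ as the endpoint of a parallel chord movement $\{K_t\}_{t\in[0,1]}$ along $e_i$: $K_0=K$, $K_1=S_i(K)$, with constant speed on each chord, namely $\beta(x)=-\min\{s: x+se_i\in K\}$. The two systems $A_t$ and $C_t$ are linear parameter systems, so Theorem~\ref{thm:BC-shadow-lp} makes $A_t\oplus_p C_t$ a linear parameter system. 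Its volume is convex in $t$, and chord lengths are convex in $t$.

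A cleaner route to the inclusion works directly with the definition \eqref{def:Firey-sums-non-convex}. Points of $A\oplus_p C$ are $(1-s)^{1/q}a+s^{1/q}c$. The $i$-th coordinates of $a$ and $c$ have fixed signs on the orthant pieces, and $S_i$ only translates each chord. Comparing the chord of $S_i(A)\oplus_p S_i(C)$ over a point $x\in e_i^\perp$ with the chord of $A\oplus_p C$ over $x$, the former has length at most the latter. This is where the opposite sign matters: $A$ and $C$ point in opposite $i$-directions, so reflecting $C$ can only shorten chords (as in $[0,a]\oplus_p[-c,0]$ versus $[0,a]\oplus_p[0,c]$). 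Hence $|S_i(A)\oplus_p S_i(C)|\le|A\oplus_p C|$, since $S_i$ preserves volume.

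\textbf{Step 2 (chain).} The step above is only useful when the $i$-th coordinate of $C$ is negative; on coordinates where $C$ is already positive, $S_i$ acts as the identity. Applying it successively for $i=1,\dots,n$ gives
\[
|A\oplus_p B|\le\cdots\le|A\oplus_p -B|,
\]
which is \eqref{volume_orthant_lemma}. If equality holds overall, each intermediate step is an equality. A closed set inclusion $S_i(A)\oplus_p S_i(C)\subseteq S_i(A\oplus_p C)$ between convex bodies of equal volume is an equality, since both are full-dimensional: full-dimensionality is inherited along the chain from the hypothesis on $A\oplus_p -B$. For $i$ at the first step, $C=-B$, which yields \eqref{strict_symmetrization_containment}.

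To get \eqref{strict_symmetrization_containment} for every $i$, not just the first, reorder the symmetrizations so that $i$ is performed first. Each ordering gives the same endpoint $A\oplus_p B$, so the chain is tight for every ordering.

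\textbf{Main obstacle.} The key difficulty is the chord-wise inclusion in Step 1, namely proving that $S_i(A)\oplus_p S_i(C)\subseteq S_i(A\oplus_p C)$ for non-convex-looking combinations. Here I would use the explicit parametrisation \eqref{def:Firey-sums-non-convex}, the anti-blocking structure (chords start on $e_i^\perp$), and the identity $P_{e_i^\perp}(A\oplus_p C)=P_{e_i^\perp}A\oplus_p P_{e_i^\perp}C$. With these, the chord length of $A\oplus_p C$ over $x$ is bounded below by the sum of the two one-sided extents, $\max(1-s)^{1/q}f_A+\max s^{1/q}g_C$. This sum dominates the length of the one-sided chord of $S_i(A)\oplus_p S_i(C)$, which is
\[
\max_s\Big((1-s)^{1/q}f_A(y)+s^{1/q}f_C(z)\Big).
\]
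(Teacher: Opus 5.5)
Your proposal is correct and follows essentially the same route as the paper. It uses the chain of shakings $S_1,\dots,S_n$ that turns $A\oplus_p -B$ into $A\oplus_p B$ (via $S_i(A)=A$, and $S_i$ flipping the negative $i$-th coordinate of the current reflected copy of $B$), then the fact that an inclusion between convex bodies of equal positive volume is an equality, and finally reordering the shakings (the paper phrases this as their commuting) to obtain \eqref{strict_symmetrization_containment} for every $i$.

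The only real difference is how the inclusion $S_i(K)\oplus_p S_i(L)\subset S_i(K\oplus_p L)$ is obtained. The paper cites it from \cite[Lemma 18]{MNZ-25} for general convex sets. Your chord-wise comparison proves it directly in the orthant setting needed here: the one-sided chord of $S_i(A)\oplus_p S_i(C)$ has length a maximum of sums, while the chord of $A\oplus_p C$ is at least the sum of the two maxima. This makes your shadow-system detour unnecessary.
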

\begin{proof}
To study the equality case, we first need to recall the proof of the inequality from \cite{MNZ-25}. In \cite[Lemma 18]{MNZ-25}, it is established that for any convex sets $K,L$
\begin{equation}\label{symmetrization_containment}    S_i(K)\oplus_p S_i(L)\subset S_i(K\oplus_p L).\end{equation}
To prove the bound \eqref{volume_orthant_lemma}, we recall the chain of inequalities established in \cite[Lemma 19]{MNZ-25}:
    \begin{equation}\label{n_fold_symmetrization}
        |A\oplus_p -B|=|S_n(A\oplus_p -B)|\geq |S_n(A)\oplus_p S_n(-B)| = |A\oplus_p S_n(-B)|,
    \end{equation}
    where 
    the first equality in \eqref{n_fold_symmetrization} is due to the fact that the operator $S_n$ preserves volume and the last equality follows from $S_n (A) =A$. We repeat the process by applying the other operators $S_j$ for $ j <n$ to obtain 
    \begin{equation}
        | A \oplus_p -B| \geq |A\oplus_p S_n(-B)|\ge\cdots \geq |A\oplus_p S_1\cdots  S_{n-1}S_n(-B)|  = |A \oplus_p B|,
        \label{n_fold_symmetrization-2}
    \end{equation}
    where the last equality is true because $S_1\ldots S_{n-1}S_n(-B) =B$. Now, if $|A\oplus_p B|=|A\oplus_p -B|$, then there is equality in all the inequalities of the line above. In particular, $|S_n(A \oplus_p -B)|=|S_n(A)\oplus_p S_n(-B)|$. But, since one set is included in the other and both are convex bodies, this implies that $S_n(A \oplus_p -B)=S_n(A)\oplus_p S_n(-B)$. Notice that the shakings in different coordinate directions commute for $A$ and $-B$, because they are just symmetries with respect to coordinate hyperplanes. So we can use the same argument to conclude that $ S_i(A)\oplus_p S_i(-B)= S_i(A\oplus_p -B)$ for all $i$.
\end{proof}

The next lemma characterizes the conditions of equality for the bound \eqref{volume_orthant_lemma}.

\begin{lemma}\label{reflection_lemma}
    Let $A,B\subset\mathbb{R}^n_+$ be anti-blocking convex sets. Then, for $1<p\leq \infty$ the equality
    \begin{equation}\label{reflection_eq}
        |A\oplus_pB|=|A\oplus_p-B|
    \end{equation}
    holds if and only if either $\textup{aff}(A)\cap \textup{aff}(B)=\{0\}$, or there exists $i\in [n]$ such that both of the sets $A$ and $B$ are contained in $e_i^{\bot}$.
\end{lemma}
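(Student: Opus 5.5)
The plan is to reduce everything to the coordinate supports of $A$ and $B$. Since $A,B\subset\mathbb{R}^n_+$ are anti-blocking, each one's affine hull is a coordinate subspace: $\textup{aff}(A)=E_I:=\textup{span}\{e_k:k\in I\}$ and $\textup{aff}(B)=E_J$ for some $I,J\subset[n]$. (Indeed, $0$ is in each set, and if some point has a nonzero $k$-th coordinate then $P_{\mathbb{R}e_k}A=A\cap\mathbb{R}e_k$ gives $te_k\in A$ for some $t>0$.) The two conditions in the statement then read $I\cap J=\emptyset$, or $I\cup J\neq[n]$. I will assume $A,B$ compact, as in Lemma \ref{strict_containment_implies_strict_bound}.

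\emph{Sufficiency.} If some $i\notin I\cup J$, then $A,B\subset e_i^\perp$. Hence both $A\oplus_pB$ and $A\oplus_p-B$ lie in $e_i^\perp$ (for instance by \eqref{def:Firey-sums-non-convex}), and both volumes vanish. If $I\cap J=\emptyset$, let $T$ be the linear map that sends $x_k\mapsto -x_k$ for $k\in J$ and fixes the other coordinates. Then $TA=A$ and $TB=-B$. By \eqref{eq:linear-invariant-lp}, $T(A\oplus_pB)=A\oplus_p-B$, and since $|\det T|=1$ the volumes agree.

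\emph{Necessity.} Suppose $I\cap J\neq\emptyset$ and $I\cup J=[n]$. I will show the inequality \eqref{volume_orthant_lemma} is strict. First, $A\oplus_p-B$ is full-dimensional: it contains $A\cup(-B)$, whose span is $E_{I\cup J}=\mathbb{R}^n$, and it is convex and contains $0$. So Lemma \ref{strict_containment_implies_strict_bound} applies, and equality would force
\[
S_i(A)\oplus_pS_i(-B)=S_i(A\oplus_p-B)\quad\text{for all } i .
\]
Fix $i\in I\cap J$ and set $a:=\max\{t:te_i\in A\}>0$ and $b:=\max\{t:te_i\in B\}>0$. The chord of $A\oplus_p-B$ along $\mathbb{R}e_i$ contains both $ae_i$ and $-be_i$, so it has length at least $a+b$. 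Therefore $S_i(A\oplus_p-B)$ contains $(a+b)e_i$, and so $h_{S_i(A\oplus_p-B)}(e_i)\ge a+b$.

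On the other side, I compute support functions in direction $e_i$. Since $A$ is anti-blocking, $S_i(A)=A$, and $h_A(e_i)=a$ because $P_{\mathbb{R}e_i}A=A\cap\mathbb{R}e_i$. For $-B$, the section of $B$ over each $x\in P_{e_i^\perp}B$ has the form $[0,f(x)]e_i$ with $f(x)\le f(0)=b$. This holds because $B$ is anti-blocking: lowering a coordinate stays in $B$, and projecting onto $\mathbb{R}e_i$ gives at most $b$. Hence every chord of $-B$ in direction $e_i$ has length at most $b$, and $h_{S_i(-B)}(e_i)=b$. By \eqref{def:Firey-sums},
\[
h_{S_i(A)\oplus_pS_i(-B)}(e_i)=(a^p+b^p)^{1/p}\quad(\text{or }\max(a,b)\text{ if }p=\infty),
\]
which is strictly less than $a+b$ because $p>1$ and $a,b>0$. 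This contradicts the set equality from Lemma \ref{strict_containment_implies_strict_bound}, so \eqref{reflection_eq} fails.

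The main point needing care is the middle step: correctly identifying the chord lengths of the shaken sets, in particular that $S_i(-B)$ reaches height exactly $b$ in direction $e_i$ and no higher. Once that is settled, the strict inequality $(a^p+b^p)^{1/p}<a+b$ finishes the argument.
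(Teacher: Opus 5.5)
Your proof is correct and follows the paper's argument. For sufficiency you use essentially the same reflection; for necessity you compare the $e_i$-extent $a+b$ of $S_i(A\oplus_p -B)$ with the value $(a^p+b^p)^{1/p}$ for $S_i(A)\oplus_p S_i(-B)$, contradicting the equality case of Lemma~\ref{strict_containment_implies_strict_bound}, exactly as the paper does. Your version is also slightly tidier: your reflection is defined on all of $\mathbb{R}^n$, you verify the full-dimensionality hypothesis of that lemma, and you only need the lower bound $a+b$ rather than the paper's exact chord-length formula.
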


\begin{proof}
    Suppose that there exists $i\in [n]$ such that both $A$ and $B$ are contained in $e_i^{\bot}$. Then $A\oplus_pB$ and $A\oplus_p-B$ are contained in $e_i^{\bot}$, and \eqref{reflection_eq} is the trivial equality $0=0$. Next, suppose that $\textup{aff}(A)\cap \textup{aff}(B)=\{0\}$. For $x_A\in \textup{aff}(A)$ and $x_B\in \textup{aff}(B)$, define the reflection $R:\mathbb{R}^n\rightarrow\mathbb{R}^n$ by $R(x_A+x_B)=x_A-x_B$. Then $R$ is linear, preserves volume, and satisfies the relations $R(A)=A$, $R(B)=-B$. It follows that
    \begin{equation*}
        \begin{split}
            |A\oplus_pB|&=\left|R\left(A \oplus_p B\right)\right| = |R(A) \oplus_p R(B)| =|A\oplus_p(-B)|,
        \end{split}
    \end{equation*}
    verifying \eqref{reflection_eq}.

    To prove the other direction, suppose that \eqref{reflection_eq} holds, and assume that for each $i\in [n]$ either $|P_{\R e_i}A|_1>0$, or $|P_{\R e_i}B|_1>0$. We will show that for each $i\in [n]$ either $P_{\R e_i}A=\{0\}$, or $P_{\R e_i}B=\{0\}$. For contradiction, assume that there exists $i\in [n]$ such that $|P_{\R e_i}A|_1>0$, and $|P_{\R e_i}B|_1>0$. Choose $\alpha_i,\beta_i>0$ such that $P_{\R e_i}A=[0,\alpha_ie_i]$ and $P_{\R e_i}B=[0,\beta_ie_i]$. We claim that 
    \begin{equation}\label{single_sum}
        |P_{\R e_i}S_i(A\oplus_p-B) |_1=\|(\alpha_i,\beta_i)\|_1,
    \end{equation}
    and
    \begin{equation}\label{split_sum}
        |P_{\R e_i}(S_i(A)\oplus_p S_i(-B))|_1=\|(\alpha_i,\beta_i)\|_p.
    \end{equation}
    One can verify \eqref{single_sum} using that
    \begin{equation*}
        \begin{split}
            P_{\R e_i}A\oplus_pP_{\R e_i}(-B)&=\bigcup_{t\in [0,1]}[-(1-t)^{\frac{1}{q}}\beta_ie_i,t^{\frac{1}{q}}\alpha_ie_i]=[-\beta_ie_i,\alpha_ie_i],
        \end{split}
    \end{equation*}
    and that
    \begin{equation*}
        |P_{\R e_i}S_i(A\oplus_p-B) |_1=|S_i(P_{\R e_i}(A\oplus_p -B))|_1=|S_i(P_{\R e_i}A\oplus_p P_{\R e_i}(-B))|_1.
    \end{equation*}
    To verify \eqref{split_sum}, use that 
    \begin{equation*}
        P_{\R e_i}S_i(A)\oplus_p P_{\R e_i}S_i(-B)=\bigcup_{t\in [0,1]}[0,t^{\frac{1}{q}}\alpha_i+(1-t)^{\frac{1}{q}}\beta_ie_i]=[0,\|(\alpha_i,\beta_i)\|_p e_i].
    \end{equation*}
    From \eqref{single_sum}, \eqref{split_sum}, and the fact that the quantity $\|(\alpha_i,\beta_i)\|_p$ is strictly decreasing in $p$, we conclude that
    \begin{equation*}
       |P_{\R e_i}(S_i(A)\oplus_p S_i(-B))|_1<|P_{\R e_i}S_i(A\oplus_p-B) |_1. 
    \end{equation*}
    Then $S_i(A)\oplus_p S_i(-B)$ is properly contained in $S_i(A\oplus_p -B)$.
    By Lemma \ref{strict_containment_implies_strict_bound}, we have a contradiction to \eqref{reflection_eq}. 
    Then, for each $i\in [n]$ exactly one of $P_{\R e_i}A$ and $P_{\R e_i}B$ is $\{0\}$. Choose $I_A\subset [n]$ so that $\textup{aff}(A)=\textup{span}\{e_i\}_{i\in I_A}$, and choose $I_B\subset [n]$ so that $\textup{aff}(B)=\{e_i\}_{i\in I_B}$. We just showed that $I_A\cap I_B=\varnothing$, which is equivalent to $\textup{aff}(A)\cap \textup{aff}(B)=\{0\}$.
\end{proof}



Next, we prove Theorem \ref{thm:anti-blocking_eq}. 
For completeness, we reproduce the argument of \cite{MNZ-25} showing the inequality in our Proposition~\ref{RS-l-anti} in the appendix.

\begin{proof}[Proof of Theorem \ref{thm:anti-blocking_eq}]
  Let $K$ be a simplex of the form $K=\textup{conv}(0,\alpha_1e_1,\dots,\alpha_ne_n)$ for real numbers $\alpha_i\neq0$. Then there exists a linear image of $K$ which is an anti-blocking simplex. Hence, it satisfies the equality case as proved in \cite[Lemma 23]{MNZ-25}.

    We now prove the other direction. Let $K$ be a locally anti-blocking convex body satisfying the equality case of Theorem~\ref{thm:anti-blocking_eq}. Since $K$ is full-dimensional, there exists $\delta \in \{-1,1\}^n$ such that $K_\delta$ is a convex body where $K_\delta := \{x \in K: \delta_ix_i \geq 0\}$. By applying a linear transformation, we may assume without loss of generality that $\delta=(1,\dots,1)$. It follows from the proof of inequality recalled in the appendix that $K$ must satisfy the following condition, corresponding to the equality cases in \eqref{eq:k-delta}: for each $\delta\in\{-1,1\}^n$, one has
        \begin{equation}\label{section_eq}
            |K_{\delta}\oplus_p-(K_{-\delta})|=|K_{\delta}\oplus_pK_{-\delta}|.
        \end{equation}
    Using Lemma \ref{reflection_lemma}, we obtain that $\textup{aff} (K_{-\delta}) = \{0\}$ and hence $K$ is anti-blocking.
    This completes the proof, since equality among anti-blocking convex bodies is attained only for simplices.
\end{proof}

\section{Proof of Theorem \ref{thm:Planar-case-general}} \label{subsec:r2}
Let $e_i^{+} := \{x: \langle x, e_i\rangle \geq 0\}$ and $e_i^{-} := \{x: \langle x, e_i\rangle \leq 0\}$ denote the closed half-spaces that are orthogonal to $e_i$. If $K\subset\mathbb{R}^2$ is a convex set, define $K^+:=K\cap e_2^+$ and $K^{-}:=K\cap e_2^-$ to be the ``upper" and ``lower" parts of $K$ with respect to the line $\{x:\langle x,e_2\rangle =0\}$.
\begin{lemma}
\label{lem:upper-lower-part-lp}
    Fix $p\in (1,\infty)$ and let $A,B\subset\mathbb{R}^2$ be convex sets such that $P_{e_2^\perp} A =  A \cap e_2^\perp $ and $ P_{e_2^\perp} B =  B \cap e_2^\perp$. Then, 
    \begin{equation}
        (A \oplus_p B)^+ = A^+ \oplus_p B^+
        \quad\text{and}\quad (A \oplus_p B)^- = A ^- \oplus_p B^- .
    \end{equation}
\end{lemma}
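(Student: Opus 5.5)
We use the Lutwak--Yang--Zhang description \eqref{def:Firey-sums-non-convex} of $A\oplus_p B$, which coincides with the Firey sum for compact convex sets containing the origin. Here $A$ and $B$ contain the origin: the hypothesis $P_{e_2^\perp}A = A\cap e_2^\perp$ makes the projection of $A$ onto the line $e_2^\perp$ equal to a segment meeting the line. We implicitly assume compactness and $0\in A,B$. We prove the upper identity; the lower one follows by applying the reflection $(x_1,x_2)\mapsto(x_1,-x_2)$ and \eqref{eq:linear-invariant-lp}, since this map preserves the hypothesis and swaps $^+$ and $^-$.

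\emph{Inclusion $A^+\oplus_p B^+\subset (A\oplus_p B)^+$.} Since $A^+\subset A$ and $B^+\subset B$, monotonicity of $\oplus_p$ gives $A^+\oplus_p B^+\subset A\oplus_p B$. A point $(1-t)^{1/q}x+t^{1/q}y$ with $x\in A^+$, $y\in B^+$ has nonnegative second coordinate, so it lies in $e_2^+$. Note that $A^+,B^+$ are convex compact sets containing $0$, so the two definitions of $\oplus_p$ agree for them.

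\emph{Inclusion $(A\oplus_p B)^+\subset A^+\oplus_p B^+$.} Take $z=(1-t)^{1/q}x+t^{1/q}y$ with $x\in A$, $y\in B$, $t\in[0,1]$, and $z_2\ge 0$. If $x_2,y_2\ge0$ we are done. Otherwise, say $y_2<0$; then $x_2>0$ is forced, since $z_2\ge0$. The idea is to move the negative part of $y$ into $x$. The hypothesis gives $P_{e_2^\perp}y=(y_1,0)\in B\cap e_2^\perp\subset B^+$. Write $z=(1-t)^{1/q}x'+t^{1/q}(y_1,0)$ with
\[
x' = x+\frac{t^{1/q}}{(1-t)^{1/q}}(0,y_2).
\]
Then $x'$ has the same first coordinate as $x$ and second coordinate $z_2/(1-t)^{1/q}\in[0,x_2]$. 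Here $t<1$, because $t=1$ would give $z=y$ with $z_2=y_2<0$. The segment from $x$ to $(x_1,0)$ lies in $A$: the endpoint $(x_1,0)=P_{e_2^\perp}x$ belongs to $A$ by the hypothesis, and $A$ is convex. Hence $x'\in A^+$, and $z\in A^+\oplus_p B^+$. The case $x_2<0$ is symmetric.

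The only delicate point is this vertical-shift step. It is exactly where the hypothesis $P_{e_2^\perp}A=A\cap e_2^\perp$ enters: the vertical segment down to the axis must stay inside $A$. Degenerate parameters ($t=0$ or $t=1$) are handled directly, and compactness guarantees that all the sets involved are the Firey sums in the sense of \eqref{def:Firey-sums}.
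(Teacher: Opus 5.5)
Your proof is correct and is essentially the paper's argument: the key step, replacing $y$ by $P_{e_2^\perp}y\in B^+$ and sliding $x$ down the vertical segment toward $(x_1,0)$ inside $A^+$, is exactly the paper's Case~3, and your reflection argument for the lower identity is the paper's ``interchange the roles of $+$ and $-$''. One small correction: the hypothesis $P_{e_2^\perp}A=A\cap e_2^\perp$ does not force $0\in A$ (take the triangle $\conv\{(1,0),(2,0),(3/2,1)\}$), but your argument never uses $0\in A,B$ or compactness, so it proves the identity for the sum \eqref{def:Firey-sums-non-convex} of arbitrary convex sets, which is what the paper proves.
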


\begin{proof}
    
    We will concentrate on the first relation. Once we finish the proof, we can prove the second relation by interchanging the roles of $+$ and $-$. Let $z=(z_1,z_2) \in A^+ \oplus_p B^+$. We use \eqref{def:Firey-sums-non-convex} to find
    $u \in [0,1]$, $(x,s) \in A^+$, and $(y,t) \in B^+$ such that
    $
        z = u^{1/q} (x,s) + (1-u)^{1/q} (y,t).
    $
    Since $s$ and $t$ are nonnegative, we have $z_2  = u^{1/q} s + (1-u)^{1/q} t 
    \geq 0$, so that $z \in (A\oplus_p B)^+$. That is, we have the containment $(A\oplus _p B)^+\supset A^+\oplus_p B^+$.

    To prove the opposite containment, let $z=(z_1,z_2) \in (A \oplus_p B)^+$. Then $z_2 \geq 0$, and it follows from \eqref{def:Firey-sums-non-convex} that $z = u^{1/q} (x,s) + (1-u)^{1/q} (y,t)$ for $u \in [0,1]$, $(x,s) \in A$, and $(y,t) \in B$. We consider the following cases. First, if $u =0$, then $ z \in A^+ \subset A^+ \oplus_p B^+$ and when $u = 1$, $ z \in B^+\subset A^+ \oplus_p B^+$. We assume that $ u \in (0,1)$.
    \begin{itemize}
        \item[Case 1:] If $s\ge0$ and $t\ge0$, then  $z \in A^+\oplus_p B^+$, which is what we want.
        \item[Case 2:] If $s\le0$ and $t\le0$, then 
        $ z_2 = u^{1/q} s + (1-u)^{1/q} t = 0$,
        and so we have $ s= t=0$, which implies $z\in A^+\oplus _p B^+$.
        \item[Case 3:] If $s\ge0$ and $t\le0$,
         set $s' = u^{-1/q}z_2$. We get that $0\leq s'\leq s$. The assumption $P_{e_2^{\bot}}A=A\cap e_2^{\bot}$ and the convexity of $A^+$ imply that the interval $\{x\}\times [0,s]$ is contained in $A^{+}$, and therefore that $(x,s')\in A^{+}$. Then
        $
            z 
            = u^{1/q}(x,s')+ (1-u)^{1/q} (y,0) \in A^+ \oplus_p B^+.
        $
    \end{itemize}
    We do not need to consider a fourth case because if $(x,s)\in A^-$ and $(y,t)\in B^+$, we can use the same argument as case 3 to show that $z\in A^{+}\oplus_p B^{+}$.
    
    The above cases establish the containment $(A\oplus_p B)^+\subset A^{+}\oplus_p B^+$.
\end{proof}

Using Lemma \ref{lem:upper-lower-part-lp}, we will prove the following generalization of  \eqref{eq:rogers_shephard_conjecture} 
in the case $n=2$.

\begin{prop} \label{prop:lp-two-bodies}
    Fix $p\in (1,\infty)$, and let $A,B\subset\mathbb{R}^2$ be convex sets such that 
    \begin{equation}
    \label{assmp:lp-two-bodies}
      P_{e_2^{\perp}} A =  A \cap e_2^\perp = P_{e_2^{\perp}} B =  B \cap e_2^\perp =[-c,d]  
    \end{equation}
    for some $c,d\ge0$. Then,
    \begin{equation}
        \label{eq:lp-two-bodies}
        |A\oplus_p - B| \leq \frac{\kappa_{2,p}}{2}\left(|A| +|B|\right).
    \end{equation}
\end{prop}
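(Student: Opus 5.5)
The plan is to split by the horizontal axis, reduce to a two-piece problem, and then apply the Bianchini--Colesanti machinery to each piece.

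\textbf{Step 1: splitting.} The sets $A$ and $-B$ both satisfy the hypothesis of Lemma~\ref{lem:upper-lower-part-lp}, since $P_{e_2^\perp}(-B)=(-B)\cap e_2^\perp=[-d,c]$. The lemma then gives
\begin{equation*}
|A\oplus_p -B| = |A^+\oplus_p (-B)^+| + |A^-\oplus_p (-B)^-|,
\end{equation*}
where $(-B)^+=-(B^-)$ and $(-B)^-=-(B^+)$. So it suffices to prove
\begin{equation*}
|A^+\oplus_p -(B^-)|\le \tfrac{\kappa_{2,p}}{2}\left(|A^+|+|B^-|\right)
\quad\text{and}\quad
|A^-\oplus_p -(B^+)|\le \tfrac{\kappa_{2,p}}{2}\left(|A^-|+|B^+|\right).
\end{equation*}
Summing the two, and using $|A|=|A^+|+|A^-|$ and $|B|=|B^+|+|B^-|$, yields \eqref{eq:lp-two-bodies}. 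By symmetry under $x\mapsto -x$, only the first inequality needs treatment.

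In it, $A^+$ lies in the upper half-plane and $-(B^-)$ lies in the upper half-plane. The base $[-c,d]$ of $A^+$ and the base $[-d,c]$ of $-(B^-)$ both sit on the axis, and the projection/section condition holds for both.

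\textbf{Step 2: parallel chord movements.} Apply horizontal parallel chord movements (direction $u=e_1$) to $A^+$ and to $-(B^-)$, using the same parameter $t$. This is in the spirit of Bianchini--Colesanti and of Meyer's technique \cite{M-91}. Each horizontal chord at height $s$ is translated so that its left endpoint moves linearly in $t$. By Theorem~\ref{thm:BC-shadow-lp}, $A^+_t\oplus_p -(B^-)_t$ is a linear parameter system, so its volume is convex in $t$. Meanwhile $|A^+_t|$ and $|B^-_t|$ stay constant.

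Convexity in $t$ lets us push to an endpoint of the parameter interval. The goal is to reach configurations that are simpler: triangles, or trapezoids with a common base.

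Concretely, I would first Steiner-type shift both bodies so that their chords are left-aligned, i.e.\ Schüttelung in direction $e_1$. The movement must preserve the base, or at least keep both bases equal up to translation; translating the base is harmless as long as $0$ stays in the base.

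After shaking, both bodies are of the form $\{(x,s): a\le x\le a+f(s)\}$ with $f$ concave. I would then symmetrize in the vertical direction in a similar way, or apply a second chord movement in direction $e_2$, reducing to triangles with a vertex at the origin.

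\textbf{Step 3: triangles.} For triangles, the bound should follow by direct computation. The relevant shapes are $A^+=\conv(0,\alpha e_1,\beta e_2)$ and $-(B^-)$ of similar form, and the two sums can be computed from support functions. The constant $\kappa_{2,p}/2$ should emerge, matching the equality case of \cite[Lemma 23]{MNZ-25}.

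\textbf{Expected main obstacle.} The hard part is Step 2. The chord movements must be chosen with a common linear speed structure so that Theorem~\ref{thm:BC-shadow-lp} applies to the pair simultaneously. They must also keep the base condition and keep the origin in the base, so that $\oplus_p$ remains well defined and Lemma~\ref{lem:upper-lower-part-lp} stays valid. Finally, the reduction has to terminate at an extremal configuration where the inequality can be checked.

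My first attempt would be the explicit family in which the chord at height $s$ is moved by $t\,s\,\lambda$, a shear-like choice that is linear in the chord's height. This preserves the base, and at the endpoints of the admissible interval the body becomes degenerate in one lateral direction, which is where one would iterate.
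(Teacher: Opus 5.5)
Your Step 1 is the paper's first step and is correct. The gap is everything after it. Steps 2--3 are a program rather than a proof of $|A^+\oplus_p-(B^-)|\le\frac{\kappa_{2,p}}{2}(|A^+|+|B^-|)$, and the moves you propose do not work as described.

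\textbf{Shaking.} Convexity in $t$ only bounds the value at the original pair if that pair lies strictly inside the parameter interval and you verify the bound at \emph{both} endpoints. A shaking path starts at the original pair, so it gives nothing. For the standard shaking, \eqref{symmetrization_containment} even shows that it can only decrease the left-hand side while preserving $|A^+|+|B^-|$, which is the wrong direction for an upper bound.

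\textbf{Common shear.} The shear $(x,s)\mapsto(x+t\lambda s,s)$ applied with the same $\lambda$ to both pieces is linear. By \eqref{eq:linear-invariant-lp}, the volume of the sum does not change at all.

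\textbf{Different shears.} If you shear the two pieces differently, the base and the origin are preserved, but the inequality you are aiming for becomes false. Take $A^+=\conv(0,e_1,e_2)$, $-(B^-)=\conv(0,-e_1,e_2)$ and $T_\mu(x,s)=(x+\mu s,s)$ with $\mu\ge0$. Then $T_\mu A^+\oplus_p-(B^-)$ contains $\conv(T_\mu A^+\cup-(B^-))$, which has area $1+\mu/2$. The right-hand side equals $\kappa_{2,p}/2$ for every $\mu$.

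So what must really be preserved is that each piece stays in the vertical strip over its base. A movement in direction $e_2$ breaks this structure altogether. Beyond that, the endpoint analysis is not done and the triangle computation of Step 3 is not carried out. Completing all of this would amount to re-proving the Bianchini--Colesanti theorem.

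The missing idea is that no movement argument is needed: the two-body inequality is the one-body planar inequality in disguise.

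\textbf{Gluing.} Set $K:=A^+\cup B^-$. By \eqref{assmp:lp-two-bodies}, both pieces lie over $[-c,d]$ and contain $[-c,d]\times\{0\}$. Hence a segment from $a\in A^+$ to $b\in B^-$ crosses the axis at a point of $[-c,d]\times\{0\}$, which lies in both pieces. So $K$ is convex and contains $0$. Moreover $K^+=A^+$, $K^-=B^-$ and $P_{e_2^\perp}K=K\cap e_2^\perp$.

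\textbf{Splitting $K$.} Lemma~\ref{lem:upper-lower-part-lp} applied to $K$ and $-K$ gives $|K\oplus_p-K|=|A^+\oplus_p-(B^-)|+|B^-\oplus_p-(A^+)|$. Since $B^-\oplus_p-(A^+)=-(A^+\oplus_p-(B^-))$, this equals $2|A^+\oplus_p-(B^-)|$.

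\textbf{Conclusion.} The planar inequality \eqref{eq:rogers_shephard_conjecture}, already proved by Bianchini and Colesanti, gives $2|A^+\oplus_p-(B^-)|\le\kappa_{2,p}|K|=\kappa_{2,p}(|A^+|+|B^-|)$. Doing the same with $A^-\cup B^+$ and summing yields \eqref{eq:lp-two-bodies}. This is exactly the paper's proof.
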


\begin{proof}
Use Lemma \ref{lem:upper-lower-part-lp} and the fact that the union $A\oplus_p -B=(A\oplus_p -B)^+\cup (A\oplus_p -B)^-$ is disjoint almost everywhere to get
    \begin{align}
        |A \oplus_p -B| &= |(A \oplus_p -B)^+| + |(A \oplus_p -B)^-|
        = |A^+ \oplus_p (-B)^+| + |A^- \oplus_p (-B)^-|.
    \end{align}
    Notice that $(-B)^+ = -(B^-)$ and $(-B)^- = -(B^+)$.
If we set $K := A^+ \cup B^-$,
then it follows from assumption \eqref{assmp:lp-two-bodies} that $K$ is a convex set containing the origin. Use Lemma \ref{lem:upper-lower-part-lp} to compute
    \begin{equation}
    \begin{split}
        |K \oplus_p -K| &= |(K \oplus_p -K)^+| + |(K \oplus_p -K)^-| =|K^+ \oplus_p (-K)^+| + |K^- \oplus_p (-K)^-| 
        \\
        &= |A^+ \oplus_p -(B^-)| + | B^- \oplus_p -(A^+)| = 2|A^+ \oplus_p -(B^-)|. 
    \end{split}
    \label{eq:lp-two-bodies-003}
    \end{equation}
    Next, we use the inequality \eqref{eq:rogers_shephard_conjecture}
    to obtain
    \begin{equation}
        \label{eq:lp-two-bodies-001}
        2|A^+ \oplus_p -(B^-)| 
        = |K \oplus_p -K | 
        \leq
        \kappa_{2,p} |K| = \kappa_{2,p} (|A^+|+|B^-|).
    \end{equation}
    By a similar argument, we find that 
    \begin{equation}
        \label{eq:lp-two-bodies-002}
        2|A^- \oplus_p -(B^+)| \leq \kappa_{2,p} (|A^-|+|B^+|).
    \end{equation}
    Finally, we combine \eqref{eq:lp-two-bodies-001} and \eqref{eq:lp-two-bodies-002} to conclude that
    \[
        |A\oplus_p -B|\leq \frac{\kappa_{2,p}}{2}((|A^+|+|A^-|)+(|B^-|+|B^+|))=\frac{\kappa_{2,p}}{2}(|A|+|B|). \qedhere
    \]
\end{proof}

\begin{lemma}\label{lem:decreasing-half-equality}
    Fix $p\in (1,\infty)$, and let $A,B\subset \mathbb{R}^2$ be two convex sets of the form 
    \begin{equation}
        A = \{ (x,s): -f(x) \leq s \leq f(x), x \in [0,c]\},\quad  B = \{ (x,s):  -g(x) \leq s \leq g(x),x \in [0,c]\},
    \end{equation}
    where $f$ and $g$ are concave functions on $[0,c]$ and $c>0$. 
  If the equality
  \begin{equation}
        \label{eq:decreasing-half-equality}
      |A\oplus_p -B| = \frac{\kappa_{2,p}}{2}\left(|A| +|B|\right),
  \end{equation} 
  holds, then we have 
  \begin{equation}
    \label{eq:decreasing-half-equality-2}
  |A^+\oplus_p -(A^+)| = \kappa_{2,p}|A^+|\quad\hbox{and}\quad|B^+\oplus_p -(B^+)| = \kappa_{2,p}|B^+|.
  \end{equation}
  Moreover, under the equality assumption in \eqref{eq:decreasing-half-equality}, if $f$ and $g$ are not identically zero, then $f$ and $g$ are non-decreasing on $[0,c]$.
\end{lemma}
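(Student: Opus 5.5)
The plan is to rerun the proof of Proposition~\ref{prop:lp-two-bodies} for these particular $A,B$ and track where equality forces each inequality to be sharp. Here $A\cap e_2^\perp=P_{e_2^\perp}A=[0,c]e_1$, and likewise for $B$, so the hypothesis \eqref{assmp:lp-two-bodies} holds with the interval $[0,c]$. Both $A$ and $B$ are symmetric about the $e_1$-axis, so $A^-$ is the reflection of $A^+$ and $B^-$ is the reflection of $B^+$. In particular $|A^\pm|=|A|/2$ and $|B^\pm|=|B|/2$.

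\textbf{Step 1: splitting the equality.} The proof of Proposition~\ref{prop:lp-two-bodies} writes $|A\oplus_p -B|$ as $|A^+\oplus_p -(B^-)|+|A^-\oplus_p -(B^+)|$ and bounds each term by \eqref{eq:lp-two-bodies-001} and \eqref{eq:lp-two-bodies-002}. Equality in \eqref{eq:decreasing-half-equality} therefore forces equality in both, so $|K\oplus_p -K|=\kappa_{2,p}|K|$ with $K=A^+\cup B^-$. Let $\sigma$ be the reflection $(x,s)\mapsto(x,-s)$. Then $-(B^-)=-\sigma(B^+)$ is the reflection of $B^+$ in the $e_2$-axis. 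This shows $K$ is built from the two "upper" graphs of $f$ and $g$.

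\textbf{Step 2: deriving \eqref{eq:decreasing-half-equality-2}.} I would combine the equality for $K=A^+\cup B^-$ with the analogous equality for $K'=A^-\cup B^+$, and then apply the inequality \eqref{eq:rogers_shephard_conjecture} to the auxiliary bodies $A^+\cup A^-=A$ and $B^+\cup B^-=B$. More directly, apply Proposition~\ref{prop:lp-two-bodies} to the pairs $(A,A)$ and $(B,B)$. These give $|A\oplus_p -A|\le \kappa_{2,p}|A|$ and $|B\oplus_p-B|\le\kappa_{2,p}|B|$. By the computation \eqref{eq:lp-two-bodies-003} these equal $2\cdot 2|A^+\oplus_p -(A^-)|$ and the analogous expression for $B$. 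Since $A^-=\sigma A^+$, the set $A^+\oplus_p -(A^-)$ is the image under the reflection $-\sigma$ of something related to $A^+\oplus_p -(A^+)$.

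The key device is to interpolate between $B$ and $A$ by a parallel chord movement in the $e_2$ direction, i.e.\ a Steiner-type linear parameter system. In the vertical direction this moves the graph $g$ toward $f$ while keeping each section centered. Theorem~\ref{thm:BC-shadow-lp} says $A_t\oplus_p -B_t$ is a linear parameter system, so its volume is convex in $t$, while $|A_t|+|B_t|$ stays constant. Together with the bound at each $t$ from Proposition~\ref{prop:lp-two-bodies}, convexity lets me show that equality at one configuration forces equality at the symmetric configurations $(A,A)$ and $(B,B)$. By Step 1 applied to $(A,A)$, this yields $2|A^+\oplus_p-(A^+)|$-type equalities, since $K=A^+\cup A^-=A$ reduces the statement to $|A^+\oplus_p -(A^+)|=\kappa_{2,p}|A^+|$ after the symmetry identification. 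The same argument gives the claim for $B$.

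\textbf{Step 3: monotonicity.} Suppose $f$ is not non-decreasing. Since $f$ is concave, it attains its maximum at some $x_0<c$ and strictly decreases on $[x_0,c]$. Consider the Schüttelung $S_2$ of $A^+$ in the vertical direction, pushing chords down onto $e_1$. Here $A^+$ already rests on $e_1$, so $S_2(A^+)=A^+$, and the relevant comparison is instead with the horizontal shaking $S_1$ toward the $e_2$-axis. That is a parallel chord movement preserving $|A^+|$, and by \eqref{symmetrization_containment} together with the convexity of volume along shadow systems, it does not decrease $|A^+\oplus_p -(A^+)|$. The strictness is the main obstacle. I would follow the idea of Lemma~\ref{reflection_lemma}: compare one-dimensional projections or chord lengths in a direction where $\|\cdot\|_1$ strictly exceeds $\|\cdot\|_p$ for $p>1$, and show that the decreasing part of $f$ produces a strict gain. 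A strict gain would give $|A^+\oplus_p-(A^+)|<\kappa_{2,p}|A^+|$, contradicting \eqref{eq:decreasing-half-equality-2}. If that route fails, the fallback is to first use \eqref{eq:decreasing-half-equality-2} with the planar equality characterization for $A^+$, though that characterization is presumably what this lemma is being used to prove.
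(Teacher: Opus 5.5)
Your Step~1 is correct, but Steps~2 and~3 each contain a genuine gap. In Step~2 the interpolation you propose does not exist. A parallel chord movement along $e_2$ only \emph{translates} each vertical chord, so it preserves the chord lengths $2f(x)$ and $2g(x)$ and can never carry $g$ to $f$; ``keeping each section centered'' means not moving at all. Moreover, even granting $|A\oplus_p-A|=\kappa_{2,p}|A|$, the ``symmetry identification'' fails. Indeed $|A\oplus_p -A|=2|A^+\oplus_p-(A^-)|$, where $-(A^-)$ is the reflection of $A^+$ in the $e_2$-axis, and $A^+\oplus_p -(A^-)$ is not a linear image of $A^+\oplus_p-(A^+)$.

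The missing idea is to \emph{decenter} the chords. With $c=1$, set $A_t=\{-(1-t)f\le s\le (1+t)f\}$ and $B_t=\{-(1-t)g\le s\le (1+t)g\}$ for $t\in[-1,1]$, and let $R(t)=|A_t\oplus_p -B_t|/(|A|+|B|)$.
By Theorem~\ref{thm:BC-shadow-lp}, $R$ is convex. It is even by the symmetry of $A,B$. It is at most $\kappa_{2,p}/2$ by \eqref{eq:lp-two-bodies}, because $A_t,B_t$ still satisfy \eqref{assmp:lp-two-bodies]}. It equals $\kappa_{2,p}/2$ at $t=0$. Hence $R$ is constant.
At $t=1$, the sets $A_1,B_1$ are $A^+,B^+$ stretched vertically by $2$. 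By Lemma~\ref{lem:upper-lower-part-lp}, $(A_1\oplus_p-B_1)^+=A_1\oplus_p[-e_1,0]$, whose area is $2|A^+\oplus_p[-e_1,0]|=|A^+\oplus_p-(A^+)|\le\kappa_{2,p}|A^+|$. Symmetrically, the lower half has area $|B^+\oplus_p-(B^+)|\le\kappa_{2,p}|B^+|$. The two areas sum to $R(1)(|A|+|B|)=\frac{\kappa_{2,p}}{2}(|A|+|B|)$, so both inequalities are equalities, which is \eqref{eq:decreasing-half-equality-2}.

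In Step~3 the strategy is unsound, not just incomplete: the implication ``$f$ not non-decreasing $\Rightarrow$ $|A^+\oplus_p-(A^+)|<\kappa_{2,p}|A^+|$'' is false. Take $c=1$ and $f(x)=h(1-x)$ with $h>0$. Then $A^+=\conv(0,e_1,he_2)$ is a triangle with a vertex at the origin. By Theorem~\ref{thm:anti-blocking_eq} it attains $|A^+\oplus_p-(A^+)|=\kappa_{2,p}|A^+|$, while $f$ is strictly decreasing. (With $B=A$, the body $A$ is then a triangle with the origin at the midpoint of an edge, so \eqref{eq:decreasing-half-equality} fails, consistent with the lemma.) So monotonicity cannot be read off from \eqref{eq:decreasing-half-equality-2}. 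Your fallback through the characterization for $A^+$ would not suffice even if it were not circular; monotonicity must come from the two-body equality.

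The paper does this at the level of chords. Each $H_z(t)=|(ze_1+\R e_2)\cap(A_t\oplus_p-B_t)|_1$ is convex and even in $t$. By Fubini and the constancy of $R$, it is constant in $t$ for a.e.\ $z$, hence for every $z$ by continuity. All maxima below are taken under the constraint $u^{1/q}x+(1-u)^{1/q}y=0$. Compare $H_0(0)=2\max\{u^{1/q}f(x)+(1-u)^{1/q}g(-y)\}$ with $H_0(1)=2\max u^{1/q}f(x)+2\max (1-u)^{1/q}g(-y)$. Their equality shows that a joint maximizer $(u_0,x_0,y_0)$ maximizes each term separately. Testing it against the admissible choices $(u,y)=(\frac{1}{1+x^q},-1)$ and $(u,x)=(\frac{(-y_0)^q}{1+(-y_0)^q},1)$ forces $x_0=-y_0=1$ and $u_0=\frac12$. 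Hence $f(x)\le f(1)\bigl(\frac{1+x^q}{2}\bigr)^{1/q}\le f(1)$, and concavity gives that $f$, and likewise $g$, is non-decreasing.
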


\begin{proof} 
    We can assume that $c=1$, applying a dilation to the sets $A$ and $B$ if necessary. Let the sets
    \[
        A_t = \{ (x,s) : -(1-t) f(x) \leq s \leq (1+t)f(x) , x\in [0,1]\}
    \]
    and 
    \[
        B_t = \{ (y,s) : -(1-t) g(y) \leq s \leq (1+t)g(y), y\in [0,1] \}
    \]
    define parallel chord movements of $A$ and $B$ along the direction $e_2$ with parameter $t\in [-1,1]$. From Theorem \ref{thm:BC-shadow-lp} we know that $\{A_t \oplus_p -B_t\}_{t \in [-1,1]}$ is also a parallel chord movement along the direction $e_2$. We use the additional facts that $|A_t| = |A|$ and $|B_t|=|B|$ for any $t\in[-1,1]$ to deduce that the function
    $$
        R(t):= \frac{|A_t \oplus_p -B_t|}{|A_t|+|B_t|}
    $$ 
    is convex and even (by symmetry of $A$ and $B$) on the interval $[-1,1]$.  It follows that $R$ achieves its minimum at $t=0$ and so $R(0)\leq R(t)$ for all $t\in [-1,1]$. 
    On the other hand, since $A_t$ and $B_t$ satisfy \eqref{assmp:lp-two-bodies}, we can apply  \eqref{eq:lp-two-bodies} to get 
    \[
        R(0)= \frac{|A\oplus_p -B|}{|A| +|B|} = \frac{\kappa_{2,p}}{2} \geq \frac{|A_t \oplus_p -B_t|}{|A_t|+|B_t|} = R(t).
    \]
  We deduce that $R$ is constant on $[-1,1]$. 
    Now, use \eqref{eq:rogers_shephard_conjecture} and Lemma \ref{lem:upper-lower-part-lp} to compute 
    \begin{align}
        \frac{\kappa_{2,p}}{2} |A| &= \kappa_{2,p}|A^+|\geq
        |A^+ \oplus_p -(A^+)| 
        = |(A^+ \oplus_p -(A^+))^+| + |(A^+ \oplus_p -(A^+))^-|
        \\
        &= |A^+ \oplus_p [-e_1,0]| + |[0,e_1]\oplus_p -(A^+)| 
        =2|A^+ \oplus_p [-e_1,0]| .
    \end{align}
    Let $T:\R^2\to\R^2$ be the linear map defined by $T(e_1)=e_1$ and $T(e_2)=2e_2$. Then $A_1=T(A_+)$ and thus 
    \[
       \frac{\kappa_{2,p}}{2} |A|  \geq |A_1 \oplus_p [-e_1,0]| = |A_1^+ \oplus_p (-B_1)^+|   = |(A_1 \oplus_p -B_1)^+|.
    \]
    By the same argument, we obtain
    \begin{align}
        &\frac{\kappa_{2,p}}{2} |B| 
        \geq |(A_1 \oplus_p -B_1)^-| .
    \end{align}
    Using Lemma \ref{lem:upper-lower-part-lp} and the fact that $R(t)=\frac{\kappa_{2,p}}{2}$ for all $t\in [-1,1]$, we obtain
    \begin{align}
        \frac{\kappa_{2,p}}{2} (|A|+|B|) 
        &\geq |(A_1 \oplus_p -B_1)^+| + |(A_1 \oplus_p -B_1)^-| = |A_1 \oplus_p -B_1| 
        \\
        &= (|A_1|+|B_1|)R(1) 
        =\frac{\kappa_{2,p}}{2} (|A|+|B|).
    \end{align}
    Hence, there is equality in all of the above inequalities, and \eqref{eq:decreasing-half-equality-2} holds.

    Next, assume $f$ and $g$ are not identically zero. We will show that $f$ is non-decreasing. First, notice that the function defined by
    $$
    H_z(t):= |(z+\mathbb{R}e_2) \cap (A_t \oplus_p -B_t)|_1,\qquad z\in[-1,1] 
    $$ 
    is a convex function of $t\in[-1,1]$. Then we have
    \begin{equation*}
        \frac{H_z(-1)+H_z(1)}{2} \geq H_z(0),
    \end{equation*}
    and by the symmetry of  $A$ and $B$, we find that $H_z$ is even. 
    We use Fubini's theorem and the fact that $R(t)$ is constant to obtain 
    \[
    \int_{-1}^1 H_z(0) dz \leq \int_{-1}^1 \frac{H_z(-1)+H_z(1)}{2} dz =\frac{R(-1)+R(1)}{2} = R(0)=\int_{-1}^1 H_z(0) dz,
    \]
    and therefore 
    $H_z (t)$ is constant on $[-1,1]$ for almost all $z\in[-1,1]$. Since the function $z\mapsto H_z(t)$ is concave on $[-1,1]$, it is continuous on $[-1,1]$ and we conclude that $t\mapsto H_z(t)$ is constant for all $z\in[-1,1]$.
    Using Lemma \ref{lem:upper-lower-part-lp}, equation \eqref{def:Firey-sums-non-convex}, and the fact that $A \oplus_p B$ is symmetric with respect to $e_2^\perp$, we get
    \begin{equation}
    \begin{split}
        H_0(0) &
       = 2|e_1^\perp\cap (A \oplus_p -B)^+| = 2|e_1
       ^\perp \cap( A^+ \oplus_p (-B)^+)|
       \\
       &=
        2 \max \limits_{\substack{0=u^{\frac{1}{q}}x+(1-u)^{\frac{1}{q}}y}}u^{\frac{1}{q}}f(x)+(1-u)^{\frac{1}{q}}g(-y) ,
    \end{split}
    \label{eq:H-t-zero}
    \end{equation}
    where the maximum is taken over those $x,u\in[0,1]$ and $y\in[-1,0]$. For convenience, we will omit writing the constraints on $x$, $u$, and $y$ in the following computations, and understand that the maximum is taken in this restricted sense. Similarly, we have
    \begin{equation}
    \begin{split}
          H_0(1)&= |e_1^\perp\cap (A_1\oplus_p -B_1)^+| +|e_1^\perp \cap (A_1 \oplus_p -B_1)^-|
          \\
          &= |e_1^\perp\cap (A_1\oplus_p [-e_1,0])| +|e_1^\perp \cap ([0,e_1] \oplus_p -B_1)|
          \\
          &=
          \max_{0=u^\frac{1}{q}x+(1-u)^\frac{1}{q}y}u^\frac{1}{q}2 f(x) + \max_{0=u^\frac{1}{q}x+(1-u)^\frac{1}{q}y}(1-u)^\frac{1}{q}2 g(-y).
    \end{split}
    \label{eq:H-t-one}
    \end{equation}
    Let $u_0\in [0,1]$, $x_0\in [0,1]$ and $y_0 \in [-1,0]$ denote the points that satisfy $u_0^{1/q} x_0 = -(1-u_0)^{1/q} y_0$, where  the maximum in \eqref{eq:H-t-zero} is attained. By the triangle inequality, we get  
    \begin{align}
        H_0(0)&= 2 \left(u_0^\frac{1}{q} f(x_0) + (1-u_0)^\frac{1}{q} g(-y_0)\right)
        \\
        &\leq \max_{0=u^\frac{1}{q}x+(1-u)^\frac{1}{q}y}u^\frac{1}{q}2f(x) + \max_{0=u^\frac{1}{q}x+(1-u)^\frac{1}{q}y}(1-u)^\frac{1}{q}2g(-y) =H_0(1),
    \end{align}
    where we used \eqref{eq:H-t-one} in the last step.
    Thus,
    $$
        u_0^{\frac{1}{q}} f(x_0)=\max_{0=u^{\frac{1}{q}}x+(1-u)^{\frac{1}{q}}y}u^{\frac{1}{q}}f(x) \text{ and }(1-u_0)^{\frac{1}{q}} g(-y_0) = \max_{0=u^{\frac{1}{q}}x+(1-u)^{\frac{1}{q}}y}(1-u)^{\frac{1}{q}}g(-y).
    $$
    Note that $f(x_0), g(-y_0) \neq 0$. Otherwise, the fact $f(x_0) =0$ would imply that $f \equiv 0$, and the fact $g(-y_0) =0$ would imply that $g \equiv 0$, which contradicts our assumption that $f$ and $g$ are not identically zero.
    By considering $u = \frac{1}{1+x^q}$ and $y= -1$, we have
    \begin{align}
     \label{eq:decreasing-half-equality-001}
         u_0^{\frac{1}{q}} f(x_0) 
        \geq \max_{x} \frac{f(x)}{(1+x^q)^{\frac{1}{q}}} 
        \geq \frac{f(x_0)}{(1+x_0^q)^{\frac{1}{q}}},
    \end{align}
    and so $u_0 \geq \frac{1}{1+x_0^q}$. On the other hand, considering $u = \frac{(-y_0)^q}{1+(-y_0)^q}, x =1$, and $y = y_0$, we get
    \begin{align}
        (1-u_0)^{\frac{1}{q}} g(-y_0) 
        \geq \frac{g(-y_0)}{(1+(-y_0)^q)^{\frac{1}{q}}},
    \end{align}
    and then $u_0 \leq \frac{(-y_0)^q}{1+(-y_0)^q}$. We have that
    \[
        \frac{1}{1+x_0^q} \leq u_0 \leq \frac{(-y_0)^q}{1+(-y_0)^q}, 
    \]
    which implies that $1\leq x_0^q(-y_0)^q$. It follows that $ x_0 = 1 = -y_0$ and that $u_0 =1/2$. Using \eqref{eq:decreasing-half-equality-001}, we find that for any $x \in [0,1]$,
    \[
        \frac{f(x)}{(1+x^q)^{\frac{1}{q}}} \leq \frac{f(1)}{2^{\frac{1}{q}}},
    \]
    so that
    \begin{align*}
        f(x) \leq f(1) \left(\frac{1+x^q}{2}\right)^{\frac{1}{q}} \leq f(1).
    \end{align*}
    As a result, because $f$ is also a concave function, $f$ is non-decreasing. Note also that $g$ is non-decreasing. Indeed, this follows by replacing $A$ by $B$ and $B$ by $A$.
\end{proof}

    \begin{lemma}
        \label{lem:formula-h+}
        Let $K \subset \R^2_+ $ be a convex body of the form 
        \[
            K = \{(x,s):0\leq s \leq f(x), x \in [0,1] \},
        \]
        for some concave function $f$. For $z\in[-1,1]$, denote
        \[
            H^{+}(z,K) = | (z+\mathbb{R}e_2) \cap (K \oplus_p -K)^+|.
        \]
        Let $x_0\in [0,1]$ be a point such that $f(x_0) = \|f\|_\infty$ and let $\Phi_z$ be a function on $[0,1]$ such that $ \Phi_z : u \mapsto \frac{(1-u)^{1/q}+z}{u^{1/q}} $. Then,
        \begin{equation}
            H^{+}(z,K)
            =
            \begin{cases}
                0, \quad &z = -1,
                \\
                \displaystyle 
                \max_{0\leq x \leq x_0} (\Phi_z^{-1}(x))^{1/q} f(x), \quad &z \in (-1,0),
                \\
                \displaystyle 
                \max_{z\leq x \leq x_0} (\Phi_z^{-1}(x))^{1/q} f(x), \quad &z \in [0,x_0],
                \\
                f(z), \quad &z\geq x_0.
            \end{cases}
            \label{eq:formula-h+}
        \end{equation}
    \end{lemma}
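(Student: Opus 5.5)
The plan is to first identify $(K\oplus_p -K)^+$ explicitly via Lemma~\ref{lem:upper-lower-part-lp}, and then compute its vertical chords by a one-parameter optimization. Both $K$ and $-K$ satisfy the hypothesis of that lemma, since $P_{e_2^\perp}K=K\cap e_2^\perp=[0,e_1]$ and $P_{e_2^\perp}(-K)=(-K)\cap e_2^\perp=[-e_1,0]$. Hence
\[
(K\oplus_p -K)^+=K^+\oplus_p(-K)^+=K\oplus_p[-e_1,0],
\]
using $(-K)^+=-(K^-)=[-e_1,0]$. By \eqref{def:Firey-sums-non-convex}, this set consists of the points $u^{1/q}(x,s)-(1-u)^{1/q}(y,0)$ with $u,x,y\in[0,1]$ and $0\le s\le f(x)$.

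It is a convex set containing the segment $[-e_1,e_1]$ and lying in $e_2^+$. Therefore, for $z\in[-1,1]$, its chord over $z$ is $\{z\}\times[0,H^+(z,K)]$, where
\[
H^+(z,K)=\max\bigl\{u^{1/q}f(x):\ u,x,y\in[0,1],\ u^{1/q}x-(1-u)^{1/q}y=z\bigr\}.
\]

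Next I eliminate $y$. For fixed $u\in(0,1]$ and $x$, a $y\in[0,1]$ satisfying the constraint exists if and only if
\[
z\,u^{-1/q}\le x\le \Phi_z(u).
\]
The endpoint case $z=-1$ is handled directly: there the constraint forces $u=0$, so $H^+(-1,K)=0$. For $z\in(-1,1]$, I check that $\Phi_z$ is strictly decreasing wherever its numerator is positive, and that it maps onto $[z,\infty)$ if $z\ge 0$, or onto $[0,\infty)$ (from the zero of the numerator at $u=1-(-z)^q$) if $z<0$. So $x\le\Phi_z(u)$ is equivalent to $u\le\Phi_z^{-1}(x)$, and the largest admissible $u$ for a given $x$ is $\Phi_z^{-1}(x)$.

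Two side conditions must be checked.
\begin{itemize}
\item The lower constraint $x\ge z u^{-1/q}$ is automatic for $z\le 0$. For $z>0$ it requires $x\ge z$, and it holds at $u=\Phi_z^{-1}(x)$, because $\Phi_z((z/x)^q)\ge x$ shows $\Phi_z^{-1}(x)\ge (z/x)^q$.
\item The admissible $x$ range over $[0,1]$ if $z<0$ and over $[z,1]$ if $z\ge 0$.
\end{itemize}
Consequently $H^+(z,K)=\max_x \Phi_z^{-1}(x)^{1/q}f(x)$ over that range.

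Finally, I cut the range down to $x\le x_0$. Since $f$ is concave with its maximum at $x_0$, it is non-increasing on $[x_0,1]$. Also $\Phi_z^{-1}$ is decreasing in $x$. Hence the product is non-increasing on $[\max(x_0,z),1]$, and the maximum may be taken over $x\le\max(x_0,z)$. This gives the middle two cases of \eqref{eq:formula-h+}. When $z\ge x_0$, the range $[z,1]$ lies entirely in the non-increasing region, so the maximum is attained at $x=z$. There $\Phi_z^{-1}(z)=1$, and the value is $f(z)$.

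The main obstacle is the bookkeeping in the second step: verifying monotonicity and the exact range of $\Phi_z$ in the cases $z<0$ versus $z\ge0$, and confirming that the lower constraint never binds at the optimum. I would settle these by differentiating $\Phi_z$ and evaluating it at the endpoints $u=1$ and $u\to0^+$ (or at the zero of its numerator when $z<0$).
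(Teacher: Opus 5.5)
Your proof is correct, and it organizes the computation differently from the paper, though both start from the same representation $H^+(z,K)=\max u^{1/q}f(x)$ subject to $z=u^{1/q}x-(1-u)^{1/q}y$.

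\textbf{The paper's route.} It fixes $u$ first and sets $F(u)=\max f$ over the $x$-window $[zu^{-1/q},\Phi_z(u)]$. It then splits the $u$-range into three pieces (for $z\in[0,x_0]$) or two pieces (for $z<0$), according to where $x_0$ sits relative to that window, and uses the monotonicity of $f$ on $[0,x_0]$ and on $[x_0,1]$ in each piece. The case $z\ge x_0$ is handled separately, via the concavity bound $u^{1/q}f(x)\le f(u^{1/q}x)\le f(z)$ (which uses $f(0)\ge 0$) together with $K\subset K\oplus_p[-e_1,0]$.

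\textbf{Your route.} You fix $x$ and optimize in $u$. Since $u^{1/q}f(x)$ is increasing in $u$ and the admissible $u$ form an interval with top endpoint $\Phi_z^{-1}(x)$, the inner maximum is immediate. This gives $H^+(z,K)=\max_x \Phi_z^{-1}(x)^{1/q}f(x)$ over the full admissible $x$-range, using only $f\ge 0$. Unimodality of $f$ then enters once, to truncate the range at $\max(x_0,z)$. The case $z\ge x_0$ falls out of the same formula via $\Phi_z^{-1}(z)=1$, with no separate concavity argument.

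\textbf{What each buys.} Your order gives a shorter, uniform argument with no case split in $u$. The only extra bookkeeping is checking that the lower constraint $u\ge (z/x)^q$ does not cut off $\Phi_z^{-1}(x)$. That is your inequality $\Phi_z((z/x)^q)\ge x$, which is the same computation the paper performs at $x=x_0$ to get $\Phi_z^{-1}(x_0)\ge (z/x_0)^q$.

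A minor point: your claim that $\Phi_z$ is decreasing where its numerator is positive suffices. In fact it is strictly decreasing on all of $(0,1]$ for $z\ge -1$, since $q\,u^{1+1/q}\Phi_z'(u)=-(1-u)^{-1/p}-z<0$.
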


    \begin{proof}
        Since $f$ is concave and its maximum is attained at $x_0$, $f$ is non-decreasing on $[0,x_0]$ and non-increasing $[x_0, 1]$. 
        From Lemma \ref{lem:upper-lower-part-lp}, we have 
\begin{equation}
\begin{split}
    H^{+}(z,K)
    &= | (z+\mathbb{R}e_2) \cap (K^+ \oplus_p (-K)^+)| 
    \\
    &= | (z+\mathbb{R}e_2) \cap (K \oplus_p [-e_1,0])| 
    =
    \max_{z = u^{\frac{1}{q}}x + (1-u)^{\frac{1}{q}}y} u^{\frac{1}{q}} f(x), 
\end{split}
\label{eq:arg-h+}
\end{equation}
where the maximum is taken over all $u\in[0,1], x \in [0,1] $ and $y \in [-1,0]$. Just as we did in the previous lemma, we will omit writing the constraints on $x$, $y$, and $u$, and understand that the maximum is taken in this restricted sense. 

First, consider the case where $z \geq x_0$. Since $K \subset K \oplus_p [-e_1,0]$, we have $f(z) \leq H^{+}(z,K)$. If we take any $u,x\in[0,1]$ and $y\in [-1,0]$ that satisfy $z = u^{1/q}x + (1-u)^{1/q}y$, then because $f$ is a concave function we get
\begin{align*}
    (1-u^\frac{1}{q})f(0)+u^\frac{1}{q}f(x)\leq f(u^\frac{1}{q}x),
\end{align*}
and therefore
\[
    u^{\frac{1}{q}} f(x) \leq  f(u^{\frac{1}{q}}x) \leq f(z),
\]
where the second inequality in the above line follows from the facts that $u^{1/q}x \geq z$ and $f$ is non-increasing on $[x_0,1]$. Thus, we have $H^+(z,K)\leq f(z)$, and because the inequality holds in the other direction as well, we have $H^{+}(z,K) = f(z)$ for $z \geq x_0$.

Let $z \in [0,x_0]$. The function $\Phi_z : [0,1] \rightarrow [z,\infty]$, defined by
$
    \Phi_z : u \mapsto \frac{(1-u)^{1/q}+z}{u^{1/q}},
$ 
is a bijection that is decreasing in $u$. For $u\in [0,1]$, define
$$
    F(u) := \max_{x \in \left[\frac{z}{u^{1/q}}, \Phi_z(u) \right]} f(x).
$$
Notice that in the argument of the maximum in \eqref{eq:arg-h+} we have $z \leq u^{1/q}x \leq u^{1/q}$, from which we get $u\geq z^q$.
Thus,
\begin{equation}
\begin{split}
    H^{+}(z,K) &=\max_{z^q\leq u}u^{\frac{1}{q}}\max_{0\leq \frac{u^{1/q}x-z}{(1-u)^{1/q}}\leq 1}f(x)\\
    &=\max_{z^q\leq u}u^{\frac{1}{q}}\max_{\frac{z}{u^{1/q}}\leq x\leq 
    \Phi_z(u)
    }f(x)= \max_{z^q \leq u } u^{\frac{1}{q}} F(u)
    \\
    &=\max \left( 
    \max_{z^q \leq u \leq (\frac{z}{x_0})^q} u^{\frac{1}{q}} F(u)
    ,
    \max_{(\frac{z}{x_0})^q \leq u \leq \Phi_z^{-1}(x_0)} u^{\frac{1}{q}} F(u)
    ,
    \max_{\Phi_z^{-1}(x_0) \leq u } u^{\frac{1}{q}} F(u)
    \right).
\end{split}
\label{eq:possible-3-cases}
\end{equation}
Note that since $z \leq z + (1-(\frac{z}{x_0})^q)^{1/q}$, we obtain 
\begin{align}
    x_0 \leq \frac{z + (1-(\frac{z}{x_0})^q)^{1/q}}{\frac{z}{x_0}} = \Phi_z \left(\left(\frac{z}{x_0}\right)^q \right) 
    \Rightarrow \Phi_z^{-1} (x_0)\geq \left(\frac{z}{x_0}\right)^q .
    \label{eq:left-case-002}
\end{align}
Consider first when $z^q \leq u \leq (\frac{z}{x_0})^q$. We have $x_0 \leq \frac{z}{u^{1/q}}$. Since $f$ is non-increasing on $\left[x_0, 1\right]$, we obtain
\begin{align}
    \max_{z^q \leq u \leq (\frac{z}{x_0})^q} u^{\frac{1}{q}} F(u) = \max_{z^q \leq u \leq (\frac{z}{x_0})^q} u^{\frac{1}{q}} f\left(\frac{z}{u^{1/q}}\right) = \left(\frac{z}{x_0}\right) f(x_0)\leq (\Phi_z^{-1} (x_0))^{\frac{1}{q}} f(x_0),
    \label{eq:left-case}
\end{align}
where we used \eqref{eq:left-case-002} in the last inequality.
Now if $(\frac{z}{x_0})^q \leq u \leq \Phi_z^{-1}(x_0)$, we have $x_0 \in \left[\frac{z}{u^{1/q}}, \Phi_z(u) \right]$. Thus,
\begin{align}
    \max_{(\frac{z}{x_0})^q \leq u \leq \Phi_z^{-1}(x_0)} u^{1/q} F(u) = \max_{(\frac{z}{x_0})^q \leq u \leq \Phi_z^{-1}(x_0)} u^{1/q} f(x_0) = \left(\Phi_z^{-1}(x_0) \right)^{1/q} f(x_0).
    \label{eq:middle-case}
\end{align}
Consider $\Phi_z^{-1}(x_0) \leq u$. Thus, $x_0 \geq \Phi_z (u)$. Since $f$ is non-decreasing on $[0,x_0]$, we obtain
\begin{align}
    \max_{\Phi_z^{-1}(x_0) \leq u } u^{1/q} F(u) =\max_{\Phi_z^{-1}(x_0) \leq u } u^{1/q} f(\Phi_z (u)) = \max_{z\leq x \leq x_0} (\Phi_z^{-1}(x))^{1/q} f(x).
    \label{eq:last-case}
\end{align}
Hence, using \eqref{eq:possible-3-cases}, \eqref{eq:left-case}, \eqref{eq:middle-case} and \eqref{eq:last-case},
\[
    H^{+}(z,K) = \max_{z\leq x \leq x_0} (\Phi_z^{-1}(x))^{1/q} f(x), \quad z \in [0,x_0].
\]

Now, consider $z \in (-1,0)$. The function $\Phi_z : [0,1] \rightarrow [z,\infty]$ defined by
$
    \Phi_z : u \mapsto \frac{(1-u)^{1/q}+z}{u^{1/q}}, 
$ 
is a bijection and decreasing in $u$. Denote
\[
F(u) = \max_{x\in[0,\Phi_z(u)]} f(x).
\]
Notice that the argument in the maximum of \eqref{eq:arg-h+}, $u\leq 1- (-z)^q$ since $z \geq (1-u)^{1/q}y \geq -(1-u)^{1/q}$. Thus,
\begin{equation}
    \begin{split}
        H^{+}(z,K) &=\max_{u\leq 1-(-z)^q} u^{1/q} F(u)
        \\
        &=\max \left( 
    \max_{0\leq u \leq \Phi_z^{-1}(x_0)} u^{1/q} F(u)
    ,
    \max_{\Phi_z^{-1}(x_0) \leq u \leq 1-(-z)^q} u^{1/q} F(u)
    \right).
    \end{split}
    \label{eq:possible-2-cases}
\end{equation}
Consider when $u \leq \Phi_z^{-1} (x_0)$ and so $\Phi_z (u) \geq x_0$. Thus,
\begin{align}
     \max_{0\leq u \leq \Phi_z^{-1}(x_0)} u^{1/q} F(u) =  \max_{0\leq u \leq \Phi_z^{-1}(x_0)} u^{1/q} f(x_0) = (\Phi_z^{-1}(x_0))^{1/q} f(x_0).
     \label{eq:left-case-2}
\end{align}
When $\Phi_z^{-1}(x_0) \leq u \leq 1-(-z)^q$, we have $x_0 \geq \Phi_z (u) \geq \Phi_z( 1- (-z)^q) = 0$. Thus, $f$ is non-decreasing,
\begin{equation}
     \max_{\Phi_z^{-1}(x_0) \leq u \leq 1-(-z)^q} u^{1/q} F(u) = \max_{\Phi_z^{-1}(x_0) \leq u \leq 1-(-z)^q} u^{1/q} f(\Phi_z(u)) = \max_{0\leq x \leq x_0} (\Phi_z^{-1}(x))^{1/q} f(x).
     \label{eq:right-cases-2}
\end{equation}
Hence, using \eqref{eq:possible-2-cases}, \eqref{eq:left-case-2} and \eqref{eq:right-cases-2},
\[
    H^{+}(z,K) = \max_{0\leq x \leq x_0} (\Phi_z^{-1}(x))^{1/q} f(x), \quad z \in (-1,0).
\]
To conclude, when $z= -1$, 
then it forces $u = 0 =x$ and $y = -1$, and so $H^{+}(-1,K) = 0$. 
    \end{proof}


    We recall that a chord $[a, b]$ of a convex body $K \subset \R^n$ is said to be an affine diameter of $K$, if there are two parallel, distinct hyperplanes $H_a$ and $H_b$ both supporting $K$ such that $a \in H_a$ and $b \in H_b$. It was proved in \cite{H-51,H-63} and also in \cite[Fact 3.3]{S-05} that any point of a convex body $K$ belongs to some affine diameter of $K$.
   

\begin{proof}[Proof of Theorem \ref{thm:Planar-case-general}]
    Let $K$ be a convex body attaining equality in \eqref{eq:rogers_shephard_conjecture}. There exist $u\in\mathbb{S}^{1}$ and $c,d\geq0$ such that the chord $[-cu,du]\subset K$ is an affine diameter, thus there exist two supporting lines of $K$ at the points $-cu$ and $du$ that are distinct and parallel. Let $v\in\mathbb{S}^{1}$ be a vector parallel to these supporting lines. Applying a linear map $T$ such that $T u=e_1$ and $T v=e_2$, we obtain that the image of the affine diameter is on the $x$-axis and the corresponding supporting lines of $TK$ are parallel to $e_2$. In particular, we have
    \[
        P_{e_2^\perp}(TK) = (TK)\cap e_2^\perp.
    \]
    Using \eqref{eq:linear-invariant-lp}, it suffices to prove that $TK$ is a triangle with one vertex at the origin. Without loss of generality, we may assume that $K $ satisfies $P_{e_2^\perp}K = K\cap e_2^\perp = [-c,d]$.

    Observe that $K_t = K+te_1$ is a parallel chord movement along the direction $e_1$ on $[-d,c]$. Using Theorem \ref{thm:BC-shadow-lp}, the family $K_t \oplus_p -K_t$ on $[-d,c]$ is also a parallel chord movement, and therefore 
    $$
        R:t \mapsto \frac{|K_t \oplus_p -K_t|}{|K_t|}
    $$ 
    is a convex function in $t \in [-d,c]$ since $|K_t| =|K|$ for any $t.$ Thus the maximum is reached at one of the endpoints $t = c,-d$. In either case, the origin is in the boundary, say the maximum reach at $t = c$. We claim that $K_c$ has to be a triangle having a vertex at the origin. Let us assume for now that this claim holds, and let us clarify what this implies for $K$. First, $K$ must be a triangle.
    Assume that the origin is not a vertex of $K$. Along the parallel chord movement, there are two possible situations, see the pictures below:
    \begin{itemize}
        \item[1.] The origin lies in the relative interior of an edge of $K$.
        \item[2.] The origin lies in the interior of $K$ and in this case, $K_{-d}$ is a simplex whose one vertex lies in the relative interior of an edge.
\end{itemize}

    In the first case, after applying some linear transformation $T$, we have that $TK$ is a locally
anti-blocking. It then follows from Theorem~\ref{thm:anti-blocking_eq} that $TK$ must be a triangle with one
vertex at the origin, and hence the same holds for $K$. In the second case, the origin lies in the interior of $K$. So the convex function $R$ reaches its maximum at zero, which is in the interior of $[-d,c]$. It implies that $R$ is constant on $[-d,c]$ and thus that $R(-d)=R(c)$, which is not possible by the equality case of Theorem~\ref{thm:anti-blocking_eq}, since $K_{-d}$ is locally anti-blocking.
      \begin{center}
    \begin{tikzpicture}[
        line cap=round,
        line join=round,
        scale=.5
    ]
    
    \definecolor{mygreen}{RGB}{120,190,50}
    \definecolor{myorange}{RGB}{245,120,20}
    \definecolor{mybrown}{RGB}{80,45,25}
    \definecolor{myblue}{RGB}{20,120,230}
    \definecolor{myred}{RGB}{220,0,0}
    \begin{scope}[xshift=0cm]
    
        \draw[->,line width=1pt] (-4.8,0) -- (4.9,0);
        \draw[->,line width=1pt] (0,-1.1) -- (0,4);
    
        \draw[red, line width=1pt]
            (-4.25,0) -- (-1.1,2.8) -- (0,0) -- cycle;
    
        \draw[black, line width=1.5pt]
            (-2.1,0) -- (1.05,2.8) -- (2.15,0) -- cycle;
    
        \draw[myblue, line width=1pt]
            (0,0) -- (3.1,2.75) -- (4.45,0) -- cycle;
    
    
        \node[text=myred!80!black] at (-1.65,3.45) {$K_{-d}$};
        \node[text=black!90!black] at (0.95,3.45) {$K$};
        \node[text=myblue ] at (3.15,3.45) {$K_c$};
    
    \end{scope}
    
    \begin{scope}[xshift=14cm,yshift=1cm]
    
        \draw[->,line width=1pt] (-4.2,0) -- (4.1,0);
        \draw[->,line width=1pt] (0,-2.0) -- (0,3.0);
    
        \draw[myred, line width=1pt]
            (-3.3,0) -- (0,1.7) -- (0,-1) -- cycle;
    
        \draw[black, line width=1.5pt]
            (-1.8,0) -- (1.7,1.7) -- (1.7,-1) -- cycle;
    
        \draw[myblue, line width=1pt]
            (0,0) -- (3.45,1.7) -- (3.45,-1) -- cycle;
    
    
        \node[text=myred!80!black] at (-.8,2.5) {$K_{-d}$};
        \node[text=black!90!black] at (1.35,2.5) {$K$};
        \node[text=myblue] at (3.75,2.5) {$K_c$};
    
    \end{scope}

    \end{tikzpicture}
    \end{center}

    Now, we prove the claim. Replacing $K_c $ by $ K$ and rescaling, we may assume that $P_{e_2^\perp} K = [0,1]$. Write $K = K^+ \cup K^-$. Define the reflection $R:\mathbb{R}^2 \rightarrow\mathbb{R}^2$ by $R(x_1,x_2) = (x_1,-x_2)$.
    Denote $A = K^+ \cup R (K^+),B = K^- \cup R (K^-)$. We have that  $A$ and $B$ are symmetric with respect to $e_2^\perp$, and then so is $A \oplus_p B$. Thus,
    \begin{equation}
    \begin{split}
        |A \oplus_p -B| &
        =2 |(A\oplus_p -B)^+|  = 2 |A^+\oplus_p (-B)^+|
        = 2|K^+ \oplus_p (-K)^+| 
        =  |K \oplus_p -K|, \label{eq:RS-lp-to-two-bodies-001}
    \end{split}
    \end{equation}
     and 
     \begin{equation}
        |A| +|B| 
        = 2|K^+|+2|K^-| = 2|K|. \label{eq:RS-lp-to-two-bodies-002}
     \end{equation}
     Thus, $A$ and $B$ satisfy  \eqref{eq:decreasing-half-equality}, that is,
     \begin{equation}
         |A \oplus_p -B| = |K \oplus_p -K| = \kappa_{2,p} |K| = \frac{\kappa_{2,p}}{2} (|A| + |B|),
         \label{eq:RS-lp-to-two-bodies-004}
     \end{equation}
     where 
     the second equality comes from the assumption that $K$ satisfies the equality in \eqref{eq:rogers_shephard_conjecture}.
     
     Using Lemma \ref{lem:decreasing-half-equality}, we see that $A^+ = K^+$ and $B^+ = K^-$ attain the equality in \eqref{eq:rogers_shephard_conjecture}. We will show that $K^+$ and $K^-$ are triangles having the origin as a vertex. Assuming this, we now explain how we conclude that $K$ is a triangle with one vertex as the origin.

    Suppose, toward a contradiction, that $K$ is not a triangle with a vertex at the origin. The case where $K$ is a triangle with the origin lying in the relative interior of an edge was excluded by above. Then $K$ is a quadrilateral with one vertex at the origin and another one at $e_1$. In this situation, there exists a parallel chord movement $\mathcal{P}_t$ with $t_0<0<t_1$ such that $\mathcal{P}_0=K$, $\mathcal{P}_{t_1}$ is a triangle with one vertex at the origin, and $\mathcal{P}_{t_0}$ is a triangle for which the origin lies in the relative interior of an edge, see the proof in \cite{CCG-99} and the picture below. Using Theorem \ref{thm:BC-shadow-lp}, we obtain that $\mathcal{P}_t \oplus_p -\mathcal{P}_t$ is also a linear parameter system. We get that $|P_t \oplus -P_t|$ is a convex function in $t \in [t_0,t_1]$. Using Theorem~\ref{thm:anti-blocking_eq}, equality in \eqref{eq:rogers_shephard_conjecture} holds for $\mathcal{P}_{t_1}$, while it is strict for $\mathcal{P}_{t_0}$, which is a linear image of a locally anti-blocking convex body whose origin is not a vertex. Hence. Therefore, 
\[
|\mathcal{P}_{t_0}\oplus_p(-\mathcal{P}_{t_0})|
< |\mathcal{P}_0\oplus_p(-\mathcal{P}_0)| =
|\mathcal{P}_{t_1}\oplus_p(-\mathcal{P}_{t_1})|.
\]
This contradicts the convexity of $t\mapsto |\mathcal{P}_t\oplus_p(-\mathcal{P}_t)|$ on $[t_0,t_1]$. Thus, $K$ must be a triangle with a vertex at the origin.
\begin{center}
     \begin{tikzpicture}[scale=.45, line cap=round, line join=round]
\definecolor{myblue}{RGB}{20,120,230}
\definecolor{myred}{RGB}{220,0,0}

\coordinate (O)  at (0,0);
\coordinate (A)  at (4,3);
\coordinate (B)  at (5.0,0);
\coordinate (C)  at (1.9,-4.1);

\coordinate (Pt0) at (-5.7,-4.25);
\coordinate (P0)  at (1.9,-4.1);
\coordinate (Pt1) at (6.3,-4.15);

\draw[->,line width=1pt] (-7.0,0) -- (7.5,0);
\draw[->,line width=1pt] (0,-4.8) -- (0,3.6);

\draw[myblue, line width=1pt] (Pt0)  -- (A) -- (B) -- cycle;
\fill[myblue!20,opacity=.3] (Pt0)  -- (A) -- (B) -- cycle;

\draw[myred, line width=1pt] (O) -- (A) -- (Pt1) -- cycle;
\fill[myred!20,opacity=.3] (O) -- (A) -- (Pt1) -- cycle;

\fill[gray!20,opacity=.2] (O) -- (A) -- (B) -- (C) -- cycle;
\draw[line width=1.6pt] (O) -- (A) -- (B) -- (C) -- cycle;

\draw[dashed, line width=1.1pt] (-6.2,-4.25) -- (6.9,-4.25);

\fill[myblue] (Pt0) circle (2.6pt);
\fill (P0) circle (2.8pt);
\fill[myblue] (Pt1) circle (2.6pt);

\node[below=6pt] at (Pt0) {$P_{t_0}$};
\node[below=6pt] at (P0) {$P_0$};
\node[below=6pt] at (Pt1) {$P_{t_1}$};

\end{tikzpicture}
\end{center}
Now we focus on $K^+$ and so we replace $K^+$ by $K$ for simplification. Write 
\[
    K = \{ (x,s) : 0 \leq s \leq f(x), x\in [0,1] \},
\]
for some concave function $f$. We will start with the case when $f(1) = \|f\|_\infty$.


Assume $f(1) = \|f\|_\infty $, using the concavity of $f$, we have that
$f$ is non-decreasing. Let $l_0$ be a line passing origin and $(1,f(1))$ and $H_0$ and $H_1$ be lines orthogonal to $l_0$ passing through zero and $(1,f(1))$, respectively. Applying a rotation so that $l_0$ is $x-$axis, we denote by $H_0'=y$-axis, $H_1'$ and $l_0'$ for the corresponding rotated lines $H_0,H_1$ and $l_0$, respectively. Let $L$ be the rotated $K.$ Thus, $L$  is between $y$-axis and $H_1'$ such that $P_{e_2^\perp} L = L\cap e_2^\perp$. 
\begin{center}
\begin{tikzpicture}[line cap=round,line join=round,scale=.6]

\definecolor{myred}{RGB}{220,0,0}
\definecolor{myblue}{RGB}{20,120,230}

\pgfmathsetmacro{\Qx}{4.55}   
\pgfmathsetmacro{\Qy}{2.95}
\pgfmathsetmacro{\Py}{1.10}   
\pgfmathsetmacro{\Tx}{5.65}   
\pgfmathsetmacro{\Ty}{2.82}

\pgfmathsetmacro{\ang}{atan2(\Qy,\Qx)} 
\pgfmathsetmacro{\ca}{cos(\ang)}
\pgfmathsetmacro{\sa}{sin(\ang)}
\pgfmathsetmacro{\rho}{veclen(\Qx,\Qy)}

\pgfmathsetmacro{\Lx}{\Py*\sa}          
\pgfmathsetmacro{\Ly}{\Py*\ca}
\pgfmathsetmacro{\Bx}{\Qx*\ca}          
\pgfmathsetmacro{\By}{-\Qx*\sa}

\begin{scope}[shift={(0,0)}]

    \draw[line width=1pt] (-1.00,0) -- (5.05,0);
    \draw[line width=1pt] (0,-0.35) -- (0,4.05);
    \draw[line width=1pt] (\Qx,0) -- (\Qx,\Qy);

    \draw[line width=1pt]
        (0,\Py)
        .. controls (0.18,1.95) and (1.95,2.72) ..
        (\Qx,\Qy)
        -- (\Qx,0) 
        --
        (0,0)
        --
        cycle
        ;
    \fill[gray!20,opacity= .5]
        (0,\Py)
        .. controls (0.18,1.95) and (1.95,2.72) ..
        (\Qx,\Qy)
        -- (\Qx,0) 
        --
        (0,0)
        --
        cycle
        ;

    \draw[myred,line width=1pt]
        (0,0) -- ({(\Qx+0.70*\ca)},{(\Qy+0.70*\sa)});
    \node[text=myred!80!black] at (-1.65,3) {$H_0$};
    \draw[myred,line width=1pt]
        ({3.0*cos(\ang+90)},{3.0*sin(\ang+90)})
        --
        ({1.50*cos(\ang-90)},{1.50*sin(\ang-90)});
    \node[text=myred!80!black] at (5.4,3.7) {$l_0$};
    \draw[myblue,dashed,line width=1.3pt,dash pattern=on 2.5pt off 3.8pt]
        (0.04,\Qy) -- (5.65,\Qy);
    \node[text=myred!80!black] at (4,5) {$H_1$};
    \node[text=black] at (2.3,-1) {$K$};
    \draw[myred,dashed,line width=1pt,dash pattern=on 7pt off 5pt]
        ({\Qx+2.55*cos(\ang+90)},{\Qy+2.55*sin(\ang+90)})
        --
        ({\Qx+1.70*cos(\ang-90)},{\Qy+1.70*sin(\ang-90)});
    
\end{scope}

\begin{scope}[shift={(8.6,1.15)}]

    \draw[line width=1pt]
        ({-0.22*\sa},{-0.22*\ca}) -- ({3.3*\sa},{3.3*\ca});
    \draw[line width=1pt]
        ({-0.22*\ca},{0.22*\sa}) -- ({4.90*\ca},{-4.90*\sa});

    \draw[line width=1pt]
        (\Lx,\Ly)
        .. controls (1.15,1.72) and (3.10,1.38) ..
        (\rho,0)
        --
        (\Bx,\By) 
        --
        (0,0)
        --
        cycle;
    \fill[color= gray!20,opacity= .5]
    (\Lx,\Ly)
        .. controls (1.15,1.72) and (3.10,1.38) ..
        (\rho,0)
        --
        (\Bx,\By) 
        --
        (0,0)
        --
        cycle;
    \draw[myred,line width=1pt] (0,-2.7) -- (0,2.80);
    \draw[myred,line width=1pt] (0,0) -- (\rho+0.85,0);
    \draw[myred,dashed,line width=1pt,dash pattern=on 7pt off 5pt]
        (\rho,2.8) -- (\rho,-2.8);
    \node[text=myred!80!black] at (5.5,3.5) {$H_1'$};
    \node[text=myred!80!black] at (0,3.5) {$H_0'$};
    \node[text=myred!80!black] at (6.7,0.1) {$l_0'$};
    \node[text=black] at (2.8,-3) {$L$};


    \draw[myblue,dashed,line width=1.3pt,dash pattern=on 2.5pt off 3.8pt]
        ({\rho-4.5*\ca},{4.5*\sa})
        --
        ({\rho+0.95*\ca},{-0.95*\sa});

\end{scope}
\end{tikzpicture}
\end{center}
\noindent
Denote by $M = L^+ \cup R (L^+)$ and $N = L^- \cup R (L^-) $ where $R$ is the rotation defined above. We repeat the process in \eqref{eq:RS-lp-to-two-bodies-001} and \eqref{eq:RS-lp-to-two-bodies-002} and then \eqref{eq:RS-lp-to-two-bodies-004}, that is, $| M \oplus_p -N|  = \frac{\kappa_{2,p}}{2} (|M|+|N|).$
Since $M$ and $L$ are symmetric with respect to $e_2^\perp$, we write 
\[
    M =\{ (x,s) : - v(x) \leq s \leq v(x), x\in [0,d]\}, \quad N = \{ (y,s) : -w(y) \leq s \leq w(y), y \in [0,d] \},
\]
for some $d >0$.
Using Lemma \ref{lem:decreasing-half-equality} and that the function $w$ is not non-decreasing, it implies that $v$ is identically zero. Thus, $L$ is a simplex with one vertex at the origin, and so is $K$.

Now we will prove the other case. Let $x_0$ be the smallest point in $[0,1)$ at which $f$ attains its maximum value, i.e., $f(x_0)=\|f\|_\infty$. Let $g$ be a function $[0,1]$ defined as follows
\[
        g(z) = \begin{cases}
        f(z), \quad &z \leq x_0,
        \\
        \displaystyle 
        \frac{f(x_0)}{1-x_0} (1-z), \quad &z \geq x_0.
    \end{cases}
\]
Note that $g$ is a concave function, then we denote by $K'$ a convex body corresponding to $g$, that is,
\[
    K' = \{ (x,s) : 0\leq s \leq g(x), x \in [0,1] \}.
\]
\begin{center}
\begin{tikzpicture}[scale=.7]
    \definecolor{myblue}{RGB}{20,120,230}

    \pgfmathsetmacro{\Py}{1.15}     
    \pgfmathsetmacro{\xzero}{2.35}  
    \pgfmathsetmacro{\yzero}{3.55}  
    \pgfmathsetmacro{\Rx}{5.45}     

    \begin{scope}[shift={(0,0)}]

        \draw[line width=1.1pt] (-0.5,0) -- (7.20,0);
        \draw[line width=1.1pt] (0,-0.5) -- (0,4.55);

        \fill[gray!15,opacity=.45]
            (0,0)
            -- (0,\Py)
            .. controls (0.35,2.15) and (1.15,3.40) .. (\xzero,\yzero)
            .. controls (3.05,3.60) and (4.35,2.20) .. (\Rx,0)
            -- cycle;

        \draw[line width=1.2pt]
            (0,0)
            -- (0,\Py)
            .. controls (0.35,2.15) and (1.15,3.40) .. (\xzero,\yzero)
            .. controls (3.05,3.60) and (4.35,2.20) .. (\Rx,0)
            -- cycle;


        \draw[myblue,line width=1.5pt]
            (0,0) -- (0,\Py);

        \draw[myblue,line width=1.5pt]
            (0,0) -- (\Rx,0);

        \draw[myblue,line width=1.5pt]
            (0,\Py)
            .. controls (0.35,2.15) and (1.15,3.40) .. (\xzero,\yzero);
        \draw[myblue,dashed,line width=1.5pt,dash pattern=on 7pt off 6pt]
            (\xzero,\yzero) -- (\Rx,0);

        \node[text=black] at (2.55,3.95) {$K$};
        \node[text=myblue] at (1.15,1.85) {$K'$};
    \end{scope}
\end{tikzpicture}
\end{center}
Thus, $K' \subset K$ since $f$ is a concave function and so $K' \oplus_p -K' \subset K \oplus_p -K$. Let 
\[
    H^{+}(z,K)  = | \{z +re_2: r \in \R\} \cap (K \oplus_p -K)^+|.
\]
Then $H^{+}(z,K') \leq H^{+}(z,K)$.
Using Lemma \ref{lem:upper-lower-part-lp}, Fubini's theorem and Lemma \ref{lem:formula-h+}, we have
\begin{equation}
\begin{split}
    \frac{\kappa_{2,p}}{2} &= \frac{|K \oplus_p -K|}{2|K|} 
    = \frac{|(K \oplus_p -K)^+| }{|K|} 
    = \frac{
    \int_{-1}^{x_0} H^{+}(z,K) dz + \int_{x_0}^1 H^{+}(z,K) dz
    }{
    \int_{0}^{x_0} f (z) dz + \int_{x_0}^1 f (z) dz
    }
     \\
    &=
    \frac{
    \int_{-1}^{x_0} H^{+}(z,K') dz + \int_{x_0}^1 f(x) dz
    }{
    \int_{0}^{x_0} g (z) dz + \int_{x_0}^1 f (z) dz
    }.
\end{split}
    \label{eq:reduce-last-step-001}
\end{equation}
Thus,
\begin{equation}
\begin{split}
    \int_{-1}^{x_0} H^{+}(z,K') dz &= \frac{\kappa_{2,p}}{2} \int_{0}^{x_0} g (z) dz + \left(\frac{\kappa_{2,p}}{2} -1\right) \int_{x_0}^1 f (z) dz 
    \\
    &\geq \frac{\kappa_{2,p}}{2} \int_{0}^{x_0} g (z) dz + \left(\frac{\kappa_{2,p}}{2} -1\right) \int_{x_0}^1 g (z) dz.
\end{split}
\label{eq:reduce-last-step-002}
\end{equation}
Using  inequality \eqref{eq:rogers_shephard_conjecture},
repeating the equalities \eqref{eq:reduce-last-step-001} for $K'$  and using \eqref{eq:reduce-last-step-002}, we have
\begin{align}
    \frac{\kappa_{2,p}}{2} {\geq}  \frac{|K' \oplus_p K'|}{2|K'|}
    {=} 
    \frac{
    \int_{-1}^{x_0} H^{+}(z,K') dz + \int_{x_0}^1 g(x) dz
    }{
    \int_{0}^{x_0} g (z) dz + \int_{x_0}^1 g (z) dz 
    }
    {\geq} \frac{\kappa_{2,p}}{2}.
\end{align}
Thus, equality holds everywhere. Observe that the equality in \eqref{eq:reduce-last-step-002} implies that $f \equiv g$. Applying a linear transform $T$ such that $Te_1 = e_1$ and $T(x_0,f(x_0)) = (1,f(x_0))$, we obtain that $TK$ is a convex body that lies between $ e_1^\perp$ and $e_1 + e_1^\perp$, $P_{e_2^\perp} (TK) = (TK) \cap e_2^\perp$ and 
\[
    TK = \{ (x,s) : 0 \leq s \leq \zeta(x), x \in [0,1] \},
\]
for some concave and non-decreasing function $\zeta$. The proof is complete using the previous case since $TK$ has to be a triangle with one vertex at the origin, and so is $K$.
\end{proof}

\begin{remark}
    It follows from the equality characterized in Theorem \ref{thm:Planar-case-general} that equality in \eqref{eq:lp-two-bodies} holds if and only if $A^+\cup B^-$ and $A^- \cup B^+$ are triangles with one vertex at the origin. 
\end{remark}

\section*{Appendix.}
Here, we provide the sketch proof of the inequality part of \eqref{eq:rogers_shephard_conjecture} when the convex bodies are restricted to the class of locally anti-blocking convex bodies to present the inequalities that we observed for the equality cases in Theorem \ref{thm:anti-blocking_eq}.
\begin{prop} \label{RS-l-anti}
    Let $K$ be a locally anti-blocking convex body. Then,
    \begin{equation}
        \label{RS-ineq-lp-2}
        | K \oplus_p -K | \leq \sum_{i=0}^n \binom{n/q}{i/q}^{-1}  \binom{n}{i}^2 |K|.
    \end{equation}
\end{prop}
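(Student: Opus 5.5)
The plan is to decompose $K$ along the coordinate orthants and treat each piece with the anti-blocking machinery from Lemma \ref{strict_containment_implies_strict_bound}. For $\delta\in\{-1,1\}^n$ set $K_\delta=\{x\in K:\delta_ix_i\ge0\}$. Since $K$ is locally anti-blocking, each $K_\delta$ is, after the reflection $x\mapsto(\delta_ix_i)_i$, an anti-blocking convex body in $\R^n_+$. Moreover, $K=\bigcup_\delta K_\delta$ with overlaps of measure zero.

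\textbf{Step 1: orthant decomposition of the sum.} Applying the upper/lower splitting of Lemma \ref{lem:upper-lower-part-lp} in each coordinate (its proof works verbatim in $\R^n$ for the hyperplane $e_i^\perp$, using $P_{e_i^\perp}K=K\cap e_i^\perp$), I expect
\[
(K\oplus_p -K)\cap \R^n_\delta = K_\delta\oplus_p (-K)_\delta = K_\delta\oplus_p -(K_{-\delta}).
\]
Summing over $\delta$ then gives
\begin{equation}\label{eq:k-delta}
|K\oplus_p -K|=\sum_{\delta}|K_\delta\oplus_p -(K_{-\delta})|\le \sum_\delta |K_\delta\oplus_p K_{-\delta}|,
\end{equation}
where the inequality is Lemma \ref{strict_containment_implies_strict_bound} applied, after reflecting by $\delta$, to the anti-blocking pair $A=K_\delta$, $B=-K_{-\delta}$ (reflected into $\R^n_+$). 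Here $K_\delta\oplus_pK_{-\delta}$ means the sum of the two pieces, both reflected into the same orthant.

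\textbf{Step 2: the anti-blocking bound for two bodies.} For anti-blocking bodies $A,B\subset\R^n_+$ I would use the bound from \cite{MNZ-25}:
\[
|A\oplus_pB|\le \sum_{i=0}^n\binom{n/q}{i/q}^{-1}\binom{n}{i}\sum_{|I|=i}|P_{E_I}A|\,|P_{E_I^\perp}B|\,\frac{\dots}{\dots},
\]
or, more precisely, a mixed bound in terms of the sections $A\cap E_I$ and $B\cap E_I^\perp$ over coordinate subspaces $E_I$. This is obtained by decomposing $A\oplus_pB$ into images of $(A\cap E_I)\times(B\cap E_I^\perp)$ under the parametrization $(1-t)^{1/q}x+t^{1/q}y$. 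Integrating over $t$ produces the Beta-type factor $\binom{n/q}{i/q}^{-1}$.

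\textbf{Step 3: summing over orthants.} Summing over $\delta$ and using $K_\delta\cap E_I=K_{\delta'}\cap E_I$ whenever $\delta,\delta'$ agree on $I$, I then apply the Rogers--Shephard-type section inequality
\[
|K_\delta\cap E_I|_i\,|K_{-\delta}\cap E_I^\perp|_{n-i}\le\binom{n}{i}|K_\delta|
\]
for anti-blocking bodies, which holds since $K_\delta$ contains $\conv\big((K_\delta\cap E_I)\cup(K_\delta\cap E_I^\perp)\big)$. Combining these and regrouping the sums should yield $\kappa_{n,p}\sum_\delta|K_\delta|=\kappa_{n,p}|K|$.

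\textbf{Main obstacle.} I expect the hardest part to be Step 2: obtaining exactly the coefficient $\binom{n/q}{i/q}^{-1}\binom{n}{i}$ from the $L_p$ parametrization. This requires an exact volume computation of the union over $t$, essentially a Dirichlet/Beta integral such as $\int_0^1$ of the Jacobian factor $(1-t)^{i/q}t^{(n-i)/q}$. My first attempt would be to reduce, via the containments \eqref{symmetrization_containment}, to the case where the pieces are simplices, and compute there directly.
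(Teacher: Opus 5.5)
Your skeleton matches the paper's: the orthant decomposition, the comparison \eqref{eq:k-delta} via Lemma~\ref{strict_containment_implies_strict_bound}, a formula from \cite{MNZ-25}, and then a Rogers--Shephard bound. But Step~2 carries the real content, and it is aimed at the wrong object, so as written it does not go through.

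Read literally, your Step~1 inequality is correct. With $A=K_\delta$ and $B=-K_{-\delta}$, the lemma gives $|A\oplus_pB|\le|A\oplus_p-B|$, and $A\oplus_p-B=K_\delta\oplus_pK_{-\delta}$ is the sum of two pieces lying in \emph{opposite} orthants. Your gloss ``both reflected into the same orthant'' instead describes $K_\delta\oplus_p-(K_{-\delta})$ again, which would make the inequality vacuous. Step~2 inherits this error: you try to bound a same-orthant sum $A\oplus_pB$ with $A,B\subset\R^n_+$ by cutting it into images of $(A\cap E_I)\times(B\cap E_I^\perp)$. That fails. These pieces all sit in $\R^n_+$, they overlap, and in general they do not even cover $A\oplus_pB$ (take $A=B=[0,1]^2$: the point $2^{1/p}(1,1)$ is missed). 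This is exactly why the shaking lemma is needed.

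For opposite orthants the decomposition is exact. Let $A,C\subset\R^n_+$ be anti-blocking and let $\R^n_\epsilon$ be the orthant with signs $+$ on $I$ and $-$ off $I$. Then $(A\oplus_p-C)\cap\R^n_\epsilon=(A\cap E_I)\oplus_p-(C\cap E_I^\perp)$. To see this, write $z=(1-t)^{1/q}x-t^{1/q}y$; for $i\in I$ set $y_i=0$ and lower $x_i$ so that $z_i$ is unchanged, and for $j\notin I$ do the reverse. This stays in $A$ and $C$ because anti-blocking sets are coordinatewise down-closed. For arbitrary $M\subset E_I$ and $N\subset E_I^\perp$ containing $0$, one has $M\oplus_pN=\{u+v:\|u\|_M^q+\|v\|_N^q\le1\}$, where $\|\cdot\|_M,\|\cdot\|_N$ are the gauges. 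Its volume $|N|_{n-i}\int_{E_I}(1-\|u\|_M^q)^{(n-i)/q}\,du$ is a Beta integral equal to $\binom{n/q}{i/q}^{-1}|M|_i|N|_{n-i}$. This is \cite[Lemma 15]{MNZ-25}, which the paper applies to $K_\delta\oplus_pK_{-\delta}$.

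So no reduction to simplices is needed; besides, shaking fixes anti-blocking bodies, so it cannot turn them into simplices. The coefficient in this step is $\binom{n/q}{i/q}^{-1}$ alone. Your $\binom{n/q}{i/q}^{-1}\binom{n}{i}$, combined with the $\binom{n}{i}$ from Step~3, would give the wrong constant $\sum_i\binom{n/q}{i/q}^{-1}\binom{n}{i}^3$.

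Step~3 also needs repair. The inequality $|K_\delta\cap E_I|\,|K_{-\delta}\cap E_I^\perp|\le\binom{n}{i}|K_\delta|$ is false in general. For $K=\conv(ae_1,\varepsilon e_2,-be_2)$, $\delta=(1,1)$ and $I=\{1\}$, the left side is $ab$ and the right side is $a\varepsilon$.

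Your sign-agreement remark gives the correct version. Set $\tau_j=\delta_j$ for $j\in I$ and $\tau_j=-\delta_j$ for $j\notin I$. Then $K_\delta\cap E_I=K_\tau\cap E_I$ and $K_{-\delta}\cap E_I^\perp=K_\tau\cap E_I^\perp$. Hence the product is at most $\binom{n}{i}|K_\tau|$, by your convex-hull argument applied to $K_\tau$. For fixed $I$, the map $\delta\mapsto\tau$ is a bijection of $\{-1,1\}^n$. Summing over $\delta$ and then over the $\binom{n}{i}$ subspaces $E_I$ gives $\kappa_{n,p}|K|$.

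With these two repairs your proof coincides with the paper's, with one difference. Your containment $\conv\big((K_\tau\cap E_I)\cup(K_\tau\cap E_I^\perp)\big)\subset K_\tau$ replaces the general Rogers--Shephard inequality \eqref{lb-RS}. That substitution is legitimate and more elementary here, because coordinate projections and sections of $K_\tau$ coincide.
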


\begin{proof}
    Using \cite[Lemma 2.2]{ASS-23}, for any locally anti-blocking convex bodies $K $, we have
    \begin{equation}
        \label{eq:lemAB-001}
        \left| K \oplus_p - K \right| 
        = \sum_{\delta \in \{ -1,1\} ^n } \left| K_\delta \oplus_p (-K)_\delta \right|.
    \end{equation}
    Using Lemma \ref{strict_containment_implies_strict_bound} and the fact that $(-K)_\delta = -(K_{-\delta})$, we obtain
    \begin{equation}\label{eq:k-delta}
        \left| K_\delta \oplus_p -(K_{-\delta}) \right|
        \leq  
        \left| K_\delta \oplus_p K_{-\delta} \right|.
    \end{equation}
    Since $K_\delta$ and $K_{-\delta}$ are in two opposite orthants, we have from \cite[Lemma 15]{MNZ-25} that
    \begin{equation}
        \label{eq:lemAB-003}
         \left| K_\delta \oplus_p K_{-\delta} \right|
        = \sum_{i=0}^n \sum_{E } \binom{n/q}{i/q}^{-1}
        \left| P_EK_\delta\right| \left| P_{E^\perp} K_{-\delta} \right|,
    \end{equation}
    where the second summation is taken over all $i$-coordinate subspaces.
    
    Fixing $i$-dimensional coordinate subspace $E = \operatorname{span}\{e_i: i\in I\}$, for each $\delta\in \{-1,1\}^n$, it follows from \cite[Fact 2.3]{S-25} and see also in \cite[Proof of the inequality in Theorem 1.1]{S-25}, we claim that there exists unique
    $\tau \in \{ -1,1\}^n$ such that 
    \begin{equation} \label{con-fact-anti}
        P_E K_\delta = P_E K_\tau \text{ and } P_{E^\perp} K_{-\delta} = P_{E^\perp} K_\tau.
    \end{equation}
    Recall the result of Rogers and Shephard \cite{RS-58}: for a convex body $K \subset \mathbb{R}^n$, and for any $k$-dimensional subspace $H$ of $\mathbb{R}^n$,
    \begin{equation}
        \label{lb-RS}
        \left|P_H K\right| \max _{x_0 \in H}\left|K \cap\left(x_0+H^{\perp}\right)\right| \leq\binom{ n}{k} |K|.
    \end{equation}
    Using \eqref{con-fact-anti} and \eqref{lb-RS}, we obtain
    \begin{align}
        \left| K \oplus_p -K \right| &
        \leq
        \sum_{i=0}^n   \sum_{E} \binom{n/q}{i/q}^{-1}
        \sum_{\tau \in \{ -1,1\} ^n } \left| P_EK_\tau\right| \left| P_{E^\perp} K_\tau \right| \label{eq:2}
        \\
        &
        {\leq} \sum_{i=0}^n  \sum_{E } \binom{n/q}{i/q}^{-1}
        \sum_{\tau \in \{ -1,1\} ^n } \binom{n}{i} \left| K_\tau \right|
         \\
        &= \sum_{i=0}^n  \binom{n/q}{i/q}^{-1}
        \binom{n}{i}^2 \left| K \right|,
    \end{align}
    where the summation on $E$ are taken over $i$-dimensional coordinate subspaces.
\end{proof}

\newpage
\bibliographystyle{siam}

\newpage 
\noindent Matthieu Fradelizi
\\
LAMA, Univ Gustave Eiffel, Univ Paris Est Creteil, 77447 Marne-la-Vall\'ee, France.
\\
E-mail address: matthieu.fradelizi@univ-eiffel.fr
\vspace{2mm}
\\
\noindent Auttawich Manui 
\\
Department of Mathematical Sciences, Kent State University, Kent, OH 44242, USA.
\\
E-mail address: amanui@kent.edu
\vspace{2mm}
\\
\noindent Mark Meyer
\\
LAMA, Univ Gustave Eiffel, Univ Paris Est Creteil, 77447 Marne-la-Vall\'ee, France.
\\
E-mail address: mark.meyer@univ-eiffel.fr
\vspace{2mm}
\\
\noindent Cheikh Saliou Ndiaye
\\
LAMA, Univ Gustave Eiffel, Univ Paris Est Creteil, 77447 Marne-la-Vall\'ee, France.
\\
E-mail address: cheikh-saliou.ndiaye@univ-eiffel.fr
\vspace{2mm}
\\

\end{document}